\numberwithin{equation}{section}
\newcommand{\KK}{\mathbb K}
\def\I{{\operatorname{I}}}
\def\ord{\operatorname{ord}}
\def\bbar#1{\setbox0=\hbox{$#1$}\dimen0=.2\ht0 \kern\dimen0 
\overline{\kern-\dimen0 #1}}
\newtheorem{thm}{Theorem}[section]
\newtheorem{lemma}[thm]{Lemma}
\newtheorem{cor}[thm]{Corollary}
\newtheorem{prop}[thm]{Proposition}
\theoremstyle{definition}
\newtheorem{definition}[thm]{Definition}
\newtheorem{notation}[thm]{Notation}
\newtheorem{remark}[thm]{Remark}
\newenvironment{romanenum}{\hfill \begin{enumerate} }{\end{enumerate}}
\definecolor{webcolor}{rgb}{0.8,0,0.2}
\definecolor{webbrown}{rgb}{.6,0,0}
\begin{document}
\title[two dimentional jacobian conjecture]{Intersection numbers and split of minor roots}

\subjclass[2010]{Primary 14R15}
\keywords{Jacobian conjecture;  intersection number; minor root; major root; final root}

\author{Yansong Xu}
\address{}
\email{yansong\_xu@yahoo.com}

\begin{abstract}
There are two main parts in this manuscript. First, 
for a Jacobian pair $(f, g)$, with the concept of final major roots and final minor roots, we obtain equations  and inequalities for the intersection numbers $\I (f_\xi, g)$ and  ${\I}(f_\xi,f_y)$ respectively, here $\xi$ is a generic element of the base field and $f_\xi=f-\xi$ and discuss usage of them in some special cases.
 Second, we discuss all possibilities of the splits of principal minor roots for the case of degree (99, 66) with help of Abhyankar-Moh planar semigroup, find an unknown possible split and suggest case (99, 66) is open.
\end{abstract}

\maketitle


\section{Introduction}
Through out this paper, $\KK$ denotes an algebraically closed field of characteristic 0.

Let $(f(x,y), g(x,y))$ be a Jacobian pair. The Jacobian condition $f_x g_y-f_y g_x=J$ is an expression of two terms. We take a root of $f(y)f_y(y)$ in the Puiseux field $\KK\ll x^{-1} \gg$ and plug in the expression. The expression becomes one term, therefore the orders of the factors of the expression are calculable in conditions.

Moh in \cite{moh} first introduces the concepts of $\pi$-root, major root and minor root. He gets plentiful results for major roots and a few for minor roots. For minor roots, Moh proves that any minor roots are not split before order 1. We introduce the concept of final roots. We find some relationship between intersection numbers and the split of final minor roots. On the other hand,  ${\I}(f_\xi, g)$ can be expressed by only final major roots  while ${\I}(f_\xi,f_y)$ can also be expressed by total split $\pi$-roots. These formulas and inequalities of final major roots and final minor roots provide some constrains for possible counter examples for Jacobian Conjecture.

With the help of quasi-approximate roots and Abhyankar-Moh planar semi-group theory, we show that for any case, the principal minor roots are not splitting at order 1 for all effective quasi-approximate roots. This provide us a tool to further study the split possibilities for the principal minor roots.

In the last section, we discuss the split possibilities for case (99, 66) that Moh does not discuss in details and find an unknown possible split and suggest case (99, 66) is open.

\section{Preliminaries}

For polynomials $f(x,y), g(x,y)\in \KK[x,y]$, monic in $y$, we define intersection number of $f$ with $g$ as $\I(f,g)=\deg_x{\rm Res}_y(f,g)$.
 Let $x=t^{-1}$. We simply write $f(t^{-1}, y)$ as $f(y)$. 
$f(y)$ can be factored out completely over Puiseux field $\KK\ll t\gg$.

Let $\pi$ be a symbol. Following \cite{moh}, we define $\sigma=\sum_{j<\delta_\sigma}a_jt^j+\pi t^{\delta_\sigma}$ as a $\pi$-root of $f$ if $f(\sigma)=f_\sigma(\pi)t^{\lambda_{\sigma}}+\cdots$
and $\deg f_\sigma(\pi)>0$. The multiplicity of $\sigma$ is defined as $\deg_\pi f_\sigma(\pi)$ \cite{moh}. We call $\sigma$ is split
 if $f_\sigma(\pi)$ has more than one roots. We call $\delta_\sigma$ the split order of $\sigma$, or we say $ f$ splits  at order $\delta_\sigma$.  We call $\sigma$  is final  if $f_\sigma(\pi)$ has no multiple roots and $\deg_\pi f_\sigma(\pi)>1$. We also call a final $\pi$-root as a final root for simplicity.

Let $\alpha=\sum a_j t^j \in \KK \ll t \gg$ and let $\delta$ be a real number, we denote $\alpha |_{<\delta}=\sum_{j<\delta} a_j t^j$. If $\beta=\alpha |_{<\delta}$, then we say $\alpha$ is an extension of $\beta$.
Let $\sigma=\sum_{j<\delta_\sigma}a_jt^j+\pi t^{\delta_\sigma}$ be a $\pi$-root, we denote  $\sigma |_{\pi=b}=\sum_{j<\delta_\sigma}a_jt^j+b t^{\delta_\sigma}$.

 Let $f(x, y)$ and $g(x, y)$ be polynomials in $\KK[x, y]$, monic in $y$. We call $(f, g)$ a Jacobian pair if $f_x g_y-f_y g_x \in \KK^*$. Let $\xi$ be a generic element in $\KK$ and let $f_\xi=f-\xi$. The following proportional lemma is well-known.
\begin{lemma}\label{L:proportional}
Let $(f, g)$ be a Jacobian pair.  Let  $\deg_y f=m$ and $\deg_y g=n$.
Let $\sigma$ be a $\pi$-root and let 

$$ f_\xi (\sigma)=f_{\sigma}(\pi) t^{\lambda^f}+\cdots, \quad    g (\sigma)=g_{\sigma}(\pi) t^{\lambda^g}+\cdots.$$
Then we have
\begin{romanenum}
\item \label{L:prop i} If $\lambda^f<0$ and  $\deg_\pi f_\sigma(\pi)>1$, then
$\lambda^f:\lambda^g=m:n=\deg_\pi f_{\sigma}(\pi):\deg_\pi g_{\sigma}(\pi)$.
\item \label{L:prop ii}
If $ f_{\sigma}(\pi)$ has multiple roots, then
$f_{\sigma}(\pi) ^n/  g_{\sigma}(\pi)^m \in \KK^*.$
\end{romanenum}

\end{lemma}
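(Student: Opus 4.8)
The plan is to exploit the fact that the Jacobian condition $f_x g_y - f_y g_x = J \in \KK^*$ becomes an extremely restrictive identity once we substitute a $\pi$-root $\sigma$ and track orders in $t$. First I would set up the substitution carefully. Write $x = t^{-1}$, so $f_\xi(\sigma) = f_\sigma(\pi) t^{\lambda^f} + \cdots$ and $g(\sigma) = g_\sigma(\pi) t^{\lambda^g} + \cdots$ as given. The key observation is that $\sigma$, as a function of $t$, satisfies $\frac{d\sigma}{dt} = \sum_{j < \delta_\sigma} j a_j t^{j-1} + \delta_\sigma \pi t^{\delta_\sigma - 1}$, i.e. $\sigma$ depends on the formal parameter $\pi$ only through the single coefficient $\pi t^{\delta_\sigma}$; hence $\partial \sigma / \partial \pi = t^{\delta_\sigma}$. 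Differentiating the relation $f(x, \sigma(t,\pi)) = f_\sigma(\pi) t^{\lambda^f} + \cdots$ with respect to $t$ (holding $\pi$) and with respect to $\pi$ (holding $t$) gives two expressions, and similarly for $g$. The point is to re-express $f_y(x,\sigma)$, $f_x(x,\sigma)$ (and the same for $g$) in terms of $t$- and $\pi$-derivatives of the leading terms.

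Concretely, $\frac{\partial}{\partial \pi} f(x,\sigma) = f_y(x,\sigma)\, t^{\delta_\sigma}$, so $f_y(x,\sigma) = f_\sigma'(\pi) t^{\lambda^f - \delta_\sigma} + \cdots$, where $f_\sigma'$ denotes $d f_\sigma/d\pi$; likewise $g_y(x,\sigma) = g_\sigma'(\pi) t^{\lambda^g - \delta_\sigma} + \cdots$. For the $x$-derivatives, $\frac{d}{dt} f(x,\sigma) = f_x(x,\sigma)\cdot(-t^{-2}) + f_y(x,\sigma)\cdot \frac{d\sigma}{dt}$, which lets me solve for $f_x(x,\sigma)$ in terms of the already-known quantities; the upshot is that $f_x(x,\sigma) = (\text{leading coeff}) \cdot t^{\lambda^f + 1} + \cdots$ with leading coefficient a polynomial in $\pi$ computable from $f_\sigma$, and analogously for $g_x$. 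Now substitute all four into $f_x g_y - f_y g_x = J$. The right-hand side has order $0$ in $t$; the two terms on the left each have order $\lambda^f + \lambda^g + 1 - \delta_\sigma$. Therefore their leading coefficients must cancel, which yields the identity
\[
f_\sigma(\pi)\, g_\sigma'(\pi) \cdot c_1 \;=\; f_\sigma'(\pi)\, g_\sigma(\pi)\cdot c_2
\]
up to explicit nonzero constants involving $\lambda^f, \lambda^g, m, n, \delta_\sigma$ — in fact, tracking the combinatorics of the leading coefficients of $f_x$ and $g_x$ shows the constants are proportional to $\lambda^f$ and $\lambda^g$ respectively (this is where $m:n$ enters, since the top-degree behavior of $f$ and $g$ in $y$ forces $\lambda^f m$-type relations). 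So one gets $\lambda^g f_\sigma(\pi) g_\sigma'(\pi) = \lambda^f f_\sigma'(\pi) g_\sigma(\pi)$, i.e. $\frac{d}{d\pi}\log\big(g_\sigma^{\lambda^f}/f_\sigma^{\lambda^g}\big) = 0$ — wait, more precisely $\lambda^g\, d(\log f_\sigma) = \lambda^f\, d(\log g_\sigma)$ — and integrating gives $f_\sigma(\pi)^{\lambda^g} = c\, g_\sigma(\pi)^{\lambda^f}$ for a constant $c$. Comparing degrees in $\pi$ forces $\lambda^g \deg_\pi f_\sigma = \lambda^f \deg_\pi g_\sigma$, and comparing with the degree-in-$y$ bookkeeping (the leading form of $f_\xi$ vs $g$) gives $\lambda^f : \lambda^g = m : n = \deg_\pi f_\sigma : \deg_\pi g_\sigma$, which is (\ref{L:prop i}) once one checks $\lambda^f < 0$ guarantees the relevant terms genuinely dominate and $\deg_\pi f_\sigma > 1$ rules out degenerate cancellations. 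For (\ref{L:prop ii}), if $f_\sigma(\pi)$ has a multiple root, then $f_\sigma$ and $f_\sigma'$ share a factor; feeding this into the same leading-coefficient identity $\lambda^g f_\sigma g_\sigma' = \lambda^f f_\sigma' g_\sigma$ and using a gcd/coprimality argument forces $f_\sigma^n / g_\sigma^m$ to have vanishing logarithmic derivative, hence to be constant, so $f_\sigma(\pi)^n / g_\sigma(\pi)^m \in \KK^*$.

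The main obstacle I anticipate is the careful bookkeeping of the \emph{leading coefficients} (not just the orders) of $f_x(x,\sigma)$ and $g_x(x,\sigma)$: one must verify that no unexpected cancellation drops the order of $f_x g_y - f_y g_x$ below $\lambda^f + \lambda^g + 1 - \delta_\sigma$ before the substitution, and that the constants appearing really are proportional to $\lambda^f$ and $\lambda^g$. This requires being precise about how $\lambda^f$ encodes both the $t$-order coming from $x = t^{-1}$ and the $y$-degree $m$ — essentially $\lambda^f$ is determined by the Newton-polygon data of $f$ along $\sigma$, and the chain-rule computation of $f_x(x,\sigma)$ introduces exactly the factor that converts $\deg_\pi f_\sigma$ into $\lambda^f$. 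Handling the case $\lambda^f < 0$ hypothesis correctly (it ensures the "$\cdots$" tails are genuinely higher order and that $f_\xi(\sigma)$ is a pole, so the leading term controls everything) is the other delicate point. Everything else is formal manipulation in the Puiseux field $\KK\ll t\gg$.
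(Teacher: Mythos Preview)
The paper does not actually prove this lemma: it is stated as ``well-known'' and left without proof (it is essentially Moh's proportionality lemma from \cite{moh}). So there is no paper proof to compare against directly. That said, your approach is the standard one and matches how the paper itself later manipulates these quantities (see the proof of Lemma~\ref{L:rootorder}, which invokes \cite{moh} Proposition~4.1 for exactly the Jacobian-in-$(t,\pi)$ identity you are rediscovering). The clean way to package your computation is
\[
\frac{\partial(f(\sigma),g(\sigma))}{\partial(t,\pi)} = -J\,t^{\delta_\sigma-2},
\]
whose leading term on the left is $(\lambda^f f_\sigma g_\sigma' - \lambda^g f_\sigma' g_\sigma)\,t^{\lambda^f+\lambda^g-1}$; when $\lambda^f<0$ this order is strictly below $\delta_\sigma-2$, forcing the coefficient to vanish and yielding $f_\sigma^{\lambda^g} = c\,g_\sigma^{\lambda^f}$, hence $\lambda^f:\lambda^g = \deg_\pi f_\sigma : \deg_\pi g_\sigma$.

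There is one genuine soft spot in your write-up: you are vague about where the ratio $m:n$ comes in. It does \emph{not} fall out of the local $(t,\pi)$-Jacobian identity alone; that identity only gives $\lambda^f:\lambda^g = \deg_\pi f_\sigma:\deg_\pi g_\sigma$. The equality with $m:n$ comes from a separate global input, namely that for a Jacobian pair the top forms satisfy $(f^+)^n \sim (g^+)^m$, so at the initial $\pi$-root $\sigma = \pi t^{-1}$ one has $\deg_\pi f_\sigma = m$, $\deg_\pi g_\sigma = n$; the local identity then propagates this ratio down the tree of $\pi$-roots. Your phrase ``top-degree behavior of $f$ and $g$ in $y$ forces $\lambda^f m$-type relations'' gestures at this but does not supply it. For (\ref{L:prop ii}), note that since $\xi$ is generic, $f_\sigma$ having a multiple root forces $\lambda^f<0$ (otherwise $f_\sigma$ is either constant or of the form $p(\pi)-\xi$ with simple roots), so you are already in the regime of (\ref{L:prop i}) and the constancy of $f_\sigma^n/g_\sigma^m$ follows immediately from $f_\sigma^{\lambda^g}=c\,g_\sigma^{\lambda^f}$ together with $\lambda^f:\lambda^g=m:n$; the separate gcd argument you sketch is unnecessary.
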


\section{Root Splitting and  $ \I(f_\xi , f_y)$}

In this section, we do not need the Jacobian condition. The results can be applied to other topics.
\begin{lemma}\label{L:derivative}
Let $p(\pi)$ be a polynomial in $\KK[\pi]$ and $p(\pi)$ have $e$ different roots. Then the derivative  $p^{\prime}(\pi)$ has exactly $e-1$ roots that are not roots of $p(\pi)$.
\end{lemma}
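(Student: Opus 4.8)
The plan is to write $p(\pi) = c\prod_{i=1}^{e}(\pi - \alpha_i)^{m_i}$ with the $\alpha_i$ distinct and $m_i \ge 1$, set $N = \sum_i m_i = \deg p$, and count the roots of $p'$ with multiplicity, separating those that coincide with roots of $p$ from those that do not. First I would observe that $\deg p' = N-1$, so $p'$ has exactly $N-1$ roots counted with multiplicity. Next, each $\alpha_i$ with $m_i \ge 2$ is a root of $p'$ of multiplicity exactly $m_i - 1$: this follows by writing $p(\pi) = (\pi-\alpha_i)^{m_i} q(\pi)$ with $q(\alpha_i)\ne 0$ and differentiating, $p'(\pi) = (\pi-\alpha_i)^{m_i-1}\bigl(m_i q(\pi) + (\pi-\alpha_i)q'(\pi)\bigr)$, where the second factor does not vanish at $\alpha_i$. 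Hence the total multiplicity of $p'$ concentrated at the roots of $p$ is $\sum_{i}(m_i - 1) = N - e$.

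The remaining roots of $p'$, counted with multiplicity, therefore number $(N-1) - (N-e) = e - 1$, and by construction none of them is a root of $p$. This already gives the count $e-1$ \emph{with multiplicity}; since the statement only asserts that $p'$ has exactly $e-1$ roots that are not roots of $p$, and the natural reading here is ``counted with multiplicity'' (consistent with how multiplicities of $\pi$-roots are used elsewhere in the paper), this completes the argument. If instead a count of \emph{distinct} such roots is wanted, one still gets ``at most $e-1$'', and equality can fail (e.g.\ $p(\pi)=\pi^3-3\pi$ has $e=3$ but $p'=3\pi^2-3$ has $2$ distinct roots, so equality does hold here; a cleaner potential subtlety is whether two of these extra roots could collide with each other — they can, which is why the multiplicity reading is the correct one).

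The only genuine step requiring care is the local multiplicity computation at each $\alpha_i$, i.e.\ verifying that differentiation drops the multiplicity by exactly one rather than by more; this is the factorization-and-differentiate step above, and it is completely routine over a field of characteristic $0$ (the hypothesis $\operatorname{char}\KK = 0$ is what guarantees $m_i \ne 0$ in $\KK$, so that the factor $m_i q(\alpha_i) + 0 \ne 0$). I expect no real obstacle; the proof is a direct multiplicity count via $\deg p' = \deg p - 1$ together with the behavior of multiplicities under differentiation.
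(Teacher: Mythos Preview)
Your argument is correct and is essentially the same as the paper's: the paper factors $p(\pi)=a\prod_i(\pi-a_i)^{n_i}$, writes $p'(\pi)=p_1(\pi)\prod_i(\pi-a_i)^{n_i-1}$, and observes $\deg p_1=e-1$ with $p_1$ and $p$ coprime. Your reading of ``$e-1$ roots'' as a count with multiplicity is exactly right, and is confirmed both by the paper's proof and by how the lemma is applied in Proposition~\ref{P:fy}(\ref{L:fy iii}), where the factor $f_1(\pi)$ is written as $b(\pi-b_1)\cdots(\pi-b_{k-1})$ without any claim that the $b_i$ are distinct.
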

\begin{proof}
Let $p(\pi)=a(\pi-a_1)^{n_1}\dots (\pi-a_e)^{n_e}$. Then  $p^{\prime}(\pi)=p_1(\pi)(\pi-a_1)^{n_1-1}\dots (\pi-a_e)^{n_e-1}$. It is easy to see that $\deg p_1(\pi)=e-1$ and that $p_1(\pi)$ and $p(\pi)$ have no common roots.
\end{proof}

\begin{notation} Let $p(\pi)$ be a polynomial in $\KK[\pi]$. Then $e(p(\pi))$ denotes the number of distinct roots of $p(\pi)$.
\end{notation}

The following proposition shows the structure of the roots of  $f_y$ by split $\pi$-roots of $f_\xi $.
\begin{prop}\label{P:fy}
 Let  polynomial $f(x,y)$ be monic in $y$ and $f_\xi$ have Puiseux expansion 
$$  f_\xi( y)=f_\xi(t^{-1}, y)=\prod_{i=1} ^{m}(y-\alpha _i) $$
 over $\KK\ll t\gg$ and let $\sigma=\sum_{j<\delta}a_j t^j + \pi t^\delta$
be a $\pi$-root of $f_\xi(y)$. Suppose 
$$f_\xi(\sigma)=f_{\sigma}(\pi)t^{\lambda_{\sigma}} + \operatorname{higher} \operatorname{terms} \operatorname {in } t$$
and $\deg_{\pi}f_{\sigma}(\pi)\geq1$. Then we have

\begin{romanenum}
\item \label{L:fy i}
$f_y( \sigma )=f_{\sigma}' (\pi) t^{{\lambda_{\sigma}}-\delta} + {\rm higher\; terms\; in\; } t$.
\item \label{L:fy ii}
If $\sigma$ is a split $\pi$-root of $f_\xi$,  then $\sigma$ is also a $\pi$-root of $f_y(y)$.
\item \label{L:fy iii}
If $\sigma$ is a split $\pi$-root of $f_\xi$, then there are exactly $e(f_{\sigma}(\pi))-1$ roots $\beta_1, \dots, \beta_{e(f_{\sigma}(\pi))-1}$ of $f_y$  such that
$$\ord f_\xi(\beta_i)=\lambda_{\sigma} \quad  for \; i=1, \dots, e(f_{\sigma}(\pi))-1.$$
\item \label{L:fy vi}
For any root $\beta$ of $f_y$, there is a split $\pi$-root $\sigma_\beta$ of $f_\xi$, such that
$$\ord f_\xi(\beta)=\lambda_{\sigma_\beta}.$$
$\sigma_\beta$ and $\beta$ are said related each other.
\item \label{L:fy v}
Let $B$ be the root set of $f_y$. For any split $\pi$-root $\sigma$ of $f_\xi$, let
$$B_{\sigma}=\{\beta | \beta \in B  \; and \;\ord f_\xi(\beta)=\lambda_{\sigma}\},$$
then
$$B=\biguplus_{\sigma} B_{\sigma}$$
\noindent here $\sigma$ runs through all split $\pi$-root of $f_\xi$.
\end{romanenum}
\end{prop}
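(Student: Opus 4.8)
The plan is to build up the five parts in order, using the formal Puiseux-expansion bookkeeping already set up, and reducing everything to Lemma~\ref{L:derivative} applied to $p(\pi)=f_\sigma(\pi)$. For part~\eqref{L:fy i}, I would differentiate the identity $f_\xi(\sigma)=f_\sigma(\pi)t^{\lambda_\sigma}+(\text{higher})$ with respect to $\pi$. Since $\sigma=\sum_{j<\delta}a_jt^j+\pi t^\delta$, we have $\partial\sigma/\partial\pi=t^\delta$, so by the chain rule $f_y(\sigma)\cdot t^\delta=f_\sigma'(\pi)t^{\lambda_\sigma}+(\text{higher})$, and dividing by $t^\delta$ gives the claimed leading term $f_\sigma'(\pi)t^{\lambda_\sigma-\delta}+(\text{higher})$. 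The one point to check is that differentiation commutes with the substitution $y\mapsto\sigma$ in the appropriate sense over $\KK\ll t\gg[\pi]$; this is a formal computation. For part~\eqref{L:fy ii}: $\sigma$ being split means $f_\sigma(\pi)$ has more than one root, hence $e(f_\sigma(\pi))\ge 2$, so by Lemma~\ref{L:derivative} the derivative $f_\sigma'(\pi)=p(\pi)(\pi-a_1)^{n_1-1}\cdots$ has degree $\ge 1$, i.e.\ $\deg_\pi f_\sigma'(\pi)>0$. Combined with part~\eqref{L:fy i}, this is exactly the definition of $\sigma$ being a $\pi$-root of $f_y$.

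For part~\eqref{L:fy iii}, the roots $\beta$ of $f_y$ with $\ord f_\xi(\beta)=\lambda_\sigma$ should be exactly the extensions of $\sigma$ obtained by specializing $\pi$ to the $e(f_\sigma(\pi))-1$ roots of $f_\sigma'(\pi)$ that are \emph{not} roots of $f_\sigma(\pi)$ (Lemma~\ref{L:derivative}). If $b$ is such a root, set $\beta=\sigma|_{\pi=b}$; then by part~\eqref{L:fy i}, $f_y(\beta)$ has order $>\lambda_\sigma-\delta$ (the leading coefficient $f_\sigma'(b)$ vanishes), which I would need to upgrade to: $\beta$ actually is a root of $f_y$, and $f_\xi(\beta)$ has order exactly $\lambda_\sigma$ because $f_\sigma(b)\ne 0$. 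Conversely any root of $f_y$ whose order-$\delta$ truncation equals $\sigma|_{<\delta}$ and which contributes order $\lambda_\sigma$ to $f_\xi$ must arise this way — this requires a short argument that no \emph{further} splitting past order $\delta$ occurs among these particular roots, since $f_\sigma'$ has no multiple roots away from the roots of $f_\sigma$. Counting: exactly $e(f_\sigma(\pi))-1$ such $\beta$.

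For part~\eqref{L:fy vi}, given any root $\beta$ of $f_y$, I would locate the largest $\delta$ such that the truncation $\beta|_{<\delta}$ still looks like a $\pi$-root initial segment of $f_\xi$, and set $\sigma_\beta$ to be the corresponding $\pi$-root of $f_\xi$; the order computation $\ord f_\xi(\beta)=\lambda_{\sigma_\beta}$ then follows from part~\eqref{L:fy i} together with the fact that $\beta$ specializes $\pi$ to a root of $f_{\sigma_\beta}'$ that is not a root of $f_{\sigma_\beta}$, so $f_{\sigma_\beta}$ does not vanish there — hence such $\sigma_\beta$ is automatically split (its $f$-polynomial has more than one root because its derivative has a root distinct from all of its own). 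Finally part~\eqref{L:fy v} is a formal consequence: \eqref{L:fy vi} shows every $\beta\in B$ lies in some $B_\sigma$, and the $B_\sigma$ are pairwise disjoint because distinct split $\pi$-roots $\sigma,\sigma'$ of $f_\xi$ with $\ord f_\xi(\beta)=\lambda_\sigma=\lambda_{\sigma'}$ attached to the same $\beta$ would have to agree (a $\beta$ determines its sequence of truncations, hence the unique split $\pi$-root it refines), giving the disjoint union $B=\biguplus_\sigma B_\sigma$.

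I expect the main obstacle to be part~\eqref{L:fy iii}, specifically the rigidity statement that each of the $e(f_\sigma(\pi))-1$ relevant specializations $\sigma|_{\pi=b}$ extends to a genuine root of $f_y$ in $\KK\ll t\gg$ and that these account for \emph{all} roots of $f_y$ with $f_\xi$-order equal to $\lambda_\sigma$, with correct multiplicities — i.e.\ controlling what happens to $f_y$ strictly beyond split order $\delta$. This needs the observation that since $f_\sigma'(\pi)$ has only simple roots outside the root locus of $f_\sigma(\pi)$, no secondary splitting of $f_y$ can raise the count, so the naive count from Lemma~\ref{L:derivative} is exact; the remaining parts are then essentially formal repackaging.
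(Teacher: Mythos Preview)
Your plan matches the paper's almost step for step; the paper also reduces (ii)--(v) to Lemma~\ref{L:derivative} applied to $f_\sigma(\pi)$. For (i) you differentiate in $\pi$ via the chain rule, whereas the paper computes directly from the factorization $f_y(y)=\sum_i\prod_{j\ne i}(y-\alpha_j)$; your route is shorter and equally valid, needing only the observation that $\partial/\partial\pi$ does not lower $t$-order.

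There is, however, a genuine error in your justification of (iii). You claim that ``$f_\sigma'$ has no multiple roots away from the roots of $f_\sigma$,'' and use this to argue that each specialization $\sigma|_{\pi=b}$ extends to a \emph{unique} root of $f_y$, giving the count $e(f_\sigma)-1$. This simplicity claim is false: take $f_\sigma(\pi)=\pi^3-1$, so $f_\sigma'(\pi)=3\pi^2$ has a double root at $0\notin\{1,\omega,\omega^2\}$. Lemma~\ref{L:derivative} only gives $\deg f_1=e(f_\sigma)-1$ and $\gcd(f_1,f_\sigma)=1$; it says nothing about $f_1$ being squarefree. The count survives, but for a different reason: since $\sigma$ is a $\pi$-root of $f_y$ with leading polynomial $f_\sigma'(\pi)$ by (i), the standard Newton--Puiseux continuation (this is exactly what the paper invokes as \cite{moh}, Proposition~1.2) says that a root $b$ of $f_\sigma'$ of multiplicity $m_b$ is refined by exactly $m_b$ roots of $f_y$. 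Summing over the roots of $f_1$ gives $\sum m_b=\deg f_1=e(f_\sigma)-1$, and each such $\beta$ has $\ord f_\xi(\beta)=\lambda_\sigma$ since $f_\sigma(b)\ne 0$. With this correction in place, the remainder of your proposal goes through.
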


\begin{proof}
(\ref{L:fy i}) Let $\deg_{\pi}{f_{\sigma}(\pi)}=s$. Reorder index if necessary, we can suppose that 
$$\sigma - \alpha _i=(\pi-c_i)t^\delta + \operatorname{higher}  \operatorname{terms}  \operatorname{in}t\;  \quad \operatorname{for}\;  i=1,\dots, s$$
and 
$$\sigma - \alpha _i=b_it^{l_i}+{\rm higher\;  terms\; in }\;t \quad {\rm for}\;  i=s+1,\dots, m$$
\noindent here $l_i<\delta$ for $ i=s+1,\dots, m$.
Then we have

\begin{align*}
f_\xi(\sigma)&=\prod _{i=1}^{m}(\sigma-\alpha_i)\\
&=\prod _{i=1}^{s}((\pi-c_i)t^{\delta})\prod _{i=s+1}^{m}b_it^{l_i}
+ \cdots \\
&=f_{\sigma}(\pi)t^{\lambda_{\sigma}} 
+ \cdots.
\end{align*}

As
$$f_y(y)=\sum_{i=1}^m\prod_{j\neq i}(y-\alpha_j)
=\sum_{i=1}^m{\prod_{j=1}^m(y-\alpha_j) \over y-\alpha_i}
$$

we have
\begin{align*}
f_y(\sigma)
&=\sum_{i=1}^m{\prod_{j=1}^m(\sigma-\alpha_j) \over \sigma-\alpha_i}\\
&=\sum_{i=1}^s{\prod_{j=1}^m(\sigma-\alpha_j) \over \sigma-\alpha_i}
+ \cdots\\
&=\left( \prod_{i=s+1}^mb_it^{l_i}\right) \sum_{i=1}^s{\prod_{j=1}^s(\pi-c_j) \over \pi-c_i}t^{(s-1)\delta}
+ \cdots\\
&=f_{\sigma}' (\pi)t^{{\lambda_{\sigma}} -\delta}
+ \cdots.
\end{align*}
(\ref{L:fy ii}) As $\deg_\pi f_\sigma(\pi)>1$, therefore $\deg_\pi f_\sigma'(\pi)\geq 1$. By definition, $\sigma$ is a $\pi$-root of $f_y(y)$.

\noindent (\ref{L:fy iii}) Let $k=e(f_{\sigma}(\pi))$ and let $f_{\sigma}(\pi)=c(\pi-c_1)^{n_1}\cdots (\pi-c_k)^{n_k}$. By Lemma \ref{L:derivative},

$$ f_\sigma'(\pi)=f_1(\pi)(\pi-c_1)^{n_1-1}\cdots (\pi-c_k)^{n_k-1} $$
with $\deg f_1(\pi)=k-1$ and also $f_1(\pi)$ and 
$f_{\sigma}(\pi)$ have no common roots. Let $f_1(\pi)=b(\pi-b_1)\cdots (\pi-b_{k-1}).$
By (\ref{L:fy ii}), $\sigma$ is $\pi$-root of $f_y$. By \cite{moh} Proposition 1.2, there are exactly $k-1$ roots 
$\beta_1,\dots,\beta_{k-1}$ of $f_y$ such that
 $$\beta_i=\sum_{j<\delta}a_j t^j+b_i t^\delta+\cdots, \quad \ord f_\xi(\beta_i)=\lambda_\sigma \quad {\rm for\; }i=1,\dots,k-1.$$
\noindent (\ref{L:fy vi}) As $\xi$ is a generic element of the base field $\KK$, $f_\xi(\beta)\neq 0$. Let 
$$\delta_\beta=\max \{\ord (\alpha - \beta )| f_\xi(\alpha)=0\}$$
Write $\beta=\sum b_j t^j$ and let $\sigma_\beta=\sum_{j<\delta_\beta} b_j t^j+\pi t^{\delta_\beta}$, then $\sigma_\beta$ is a $\pi$-root of $f_\xi$ and $\ord f_\xi(\sigma_\beta)=\ord f_\xi(\beta)$.

\noindent (\ref{L:fy v}) This is directly result of (\ref{L:fy iii}) and (\ref{L:fy vi}).
\end{proof}

The intersection number $ \I(f_\xi , f_y)$ can be expressed by all split $\pi$-roots.
\begin{thm} \label{T:main}
Let $f(x,y)$ be a polynomial in $\KK[x, y]$, monic in $y$. Then 
$$ \I(f_\xi , f_y) =-\sum_{\sigma} (e(f_\sigma(\pi))-1){\lambda_\sigma}$$
\noindent here  $\sigma$ goes through all splitting $\pi$-roots  of $f_\xi$ and $f_\xi(\sigma)=f_{\sigma}(\pi)t^{\lambda_\sigma}+\cdots$.
\end{thm}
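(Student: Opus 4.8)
The plan is to compute $\I(f_\xi, f_y) = \deg_x \mathrm{Res}_y(f_\xi, f_y)$ via the product formula for the resultant in terms of Puiseux roots, and then to reorganize that sum according to the combinatorial structure provided by Proposition~\ref{P:fy}. Recall that if $f_\xi(y) = \prod_{i=1}^m (y - \alpha_i)$ over $\KK\ll t\gg$, then $\mathrm{Res}_y(f_\xi, f_y) = \prod_{i=1}^m f_y(\alpha_i)$ up to a unit, and also equals (again up to a sign/unit) $\prod_{\beta : f_y(\beta)=0} f_\xi(\beta)$ where $\beta$ runs over the roots of $f_y$ in $\KK\ll t\gg$, counted with multiplicity. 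Taking $\deg_x = -\ord_t$ of this product, we get
\begin{equation*}
\I(f_\xi, f_y) = -\sum_{\beta : f_y(\beta) = 0} \ord_t f_\xi(\beta),
\end{equation*}
the sum being over the $m-1$ roots $\beta$ of $f_y$ (with multiplicity). This is the starting identity; the remaining work is purely to evaluate the right-hand side.

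Next I would invoke Proposition~\ref{P:fy}(\ref{L:fy v}): the root set $B$ of $f_y$ decomposes as a disjoint union $B = \biguplus_\sigma B_\sigma$ over the splitting $\pi$-roots $\sigma$ of $f_\xi$, where $B_\sigma = \{\beta \in B : \ord f_\xi(\beta) = \lambda_\sigma\}$. Hence
\begin{equation*}
\I(f_\xi, f_y) = -\sum_\sigma \sum_{\beta \in B_\sigma} \ord f_\xi(\beta) = -\sum_\sigma |B_\sigma| \cdot \lambda_\sigma.
\end{equation*}
So it remains to show $|B_\sigma| = e(f_\sigma(\pi)) - 1$ for each splitting $\pi$-root $\sigma$. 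Part (\ref{L:fy iii}) of the Proposition already exhibits $e(f_\sigma(\pi)) - 1$ such roots $\beta_1, \dots, \beta_{e(f_\sigma(\pi))-1}$, giving $|B_\sigma| \ge e(f_\sigma(\pi)) - 1$; combined with part (\ref{L:fy vi}) (every $\beta \in B$ lies in some $B_\sigma$) and the disjointness in (\ref{L:fy v}), summing the lower bounds over all $\sigma$ must recover the total count $\sum_\sigma (e(f_\sigma(\pi)) - 1) \le |B| = m - 1$, forcing equality in each $B_\sigma$ provided the global count matches.

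The main obstacle, therefore, is the bookkeeping of multiplicities: I must make sure that the $m-1$ roots of $f_y$ (counted with multiplicity) are exactly accounted for, i.e. that $\sum_\sigma (e(f_\sigma(\pi)) - 1) = m - 1$ and that within each $B_\sigma$ there are no further roots of $f_y$ at order $\lambda_\sigma$ beyond the $e(f_\sigma(\pi)) - 1$ produced by Lemma~\ref{L:derivative}. The cleanest way to handle this is inductively on the tree of $\pi$-roots: at the ``generic'' first split one peels off $e(f_{\sigma_0}(\pi)) - 1$ roots of $f_y$ from each node $\sigma_0$, and the remaining roots of $f_y$ are distributed among the extensions of the $\sigma_0$'s to higher split order, where the same analysis applies recursively; the recursion terminates at final roots where $\deg_\pi f_\sigma(\pi) = e(f_\sigma(\pi))$ and no further splitting occurs. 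Matching the total degree $m-1$ of $f_y$ against $\sum_\sigma (e(f_\sigma(\pi)) - 1)$ is the step where one must be careful that each root of $f_y$ is counted once and only once with the right multiplicity; once that telescoping is verified, the theorem follows by combining it with the displayed identities above.
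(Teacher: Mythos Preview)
Your approach is essentially the same as the paper's: express $\I(f_\xi,f_y)=-\sum_{\beta\in B}\ord f_\xi(\beta)$, decompose $B=\biguplus_\sigma B_\sigma$ via Proposition~\ref{P:fy}(\ref{L:fy v}), and evaluate each block as $(e(f_\sigma(\pi))-1)\lambda_\sigma$. The paper's proof is literally this one line.

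The only difference is that you read Proposition~\ref{P:fy}(\ref{L:fy iii}) as giving merely a lower bound $|B_\sigma|\ge e(f_\sigma(\pi))-1$ and then propose a separate tree-telescoping argument ($\sum_\sigma(e(f_\sigma(\pi))-1)=m-1$) to force equality. In fact the paper's statement of (\ref{L:fy iii}) already asserts \emph{exactly} $e(f_\sigma(\pi))-1$ such roots, so no further counting is needed. Your telescoping argument is correct (it is the standard identity that in a rooted tree with $m$ leaves, $\sum_v(\text{children}(v)-1)=m-1$, applied to the splitting tree of the Puiseux roots of $f_\xi$), and it is a clean independent check of the cardinality claim in (\ref{L:fy iii}); but for the purposes of this theorem it is redundant once you grant the Proposition as stated.
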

\begin{proof}
Let $B=\{\beta|f_y(\beta)=0\}$. By Proposition \ref{P:fy}(\ref{L:fy v}), $B=\biguplus_{\sigma} B_{\sigma}$. Therefore
$${\I}(f_\xi , f_y)=- \sum_{\beta \in B}\ord f_\xi(\beta)=-\sum_{\sigma}\sum_{\beta \in B_\sigma}\ord f_\xi(\beta) = -\sum_\sigma (e(f_\sigma(\pi))-1){\lambda_\sigma}.$$
\end{proof}

\section{ $ \I(f_\xi , f_y)$ and $ {\I}(f_\xi , g)$ and  Final Minor Roots}
The following lemma is simple but fundamental to our calculations.

\begin{lemma}\label{L:simple}
Let $f, g \in \KK[x, y]$. Let $f_xg_y-f_yg_x=J(x, y)$. And let $\alpha$ be an element of the Puiseux field $\KK\ll t\gg$. Then we have
\begin{equation}\label{L:simple1}
g_y(\alpha)\dfrac{d}{dt}f(\alpha) -f_y(\alpha)\dfrac{d}{dt}g(\alpha)=-J(t^{-1}, \alpha)t^{-2}.
\end{equation}

\end{lemma}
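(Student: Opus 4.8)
The plan is to unwind the definitions and apply the chain rule for formal derivatives in the Puiseux field $\KK\ll t\gg$. Recall the standing convention that $f(\alpha)$ abbreviates $f(t^{-1},\alpha(t))$; once $\alpha\in\KK\ll t\gg$ is fixed this is a genuine element of $\KK\ll t\gg$, and similarly for $g(\alpha)$. Since $\tfrac{d}{dt}(t^{-1})=-t^{-2}$, the chain rule gives
\[
\frac{d}{dt}f(\alpha)=-t^{-2}f_x(t^{-1},\alpha)+\alpha'(t)\,f_y(t^{-1},\alpha),
\]
and likewise
\[
\frac{d}{dt}g(\alpha)=-t^{-2}g_x(t^{-1},\alpha)+\alpha'(t)\,g_y(t^{-1},\alpha).
\]

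Next I would form the combination $g_y(\alpha)\,\tfrac{d}{dt}f(\alpha)-f_y(\alpha)\,\tfrac{d}{dt}g(\alpha)$ and expand using the two displays above. The two terms that carry the factor $\alpha'(t)$ are $\alpha'(t)\,f_y(t^{-1},\alpha)g_y(t^{-1},\alpha)$ appearing with opposite signs, so they cancel, and what remains is
\[
g_y(\alpha)\,\frac{d}{dt}f(\alpha)-f_y(\alpha)\,\frac{d}{dt}g(\alpha)=-t^{-2}\bigl(f_x(t^{-1},\alpha)\,g_y(t^{-1},\alpha)-f_y(t^{-1},\alpha)\,g_x(t^{-1},\alpha)\bigr).
\]
By the defining relation $f_xg_y-f_yg_x=J(x,y)$, the parenthesized expression is exactly $J(t^{-1},\alpha)$, which yields the claimed identity~\eqref{L:simple1}.

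There is essentially no obstacle here: the argument is purely formal, the only point that needs acknowledging being that formal differentiation on $\KK\ll t\gg$ satisfies the chain rule under substitution of a polynomial in $x$ and $y$, which is standard. I would also remark, for later use, that the identity is valid for an arbitrary pair $f,g\in\KK[x,y]$ with $J$ merely defined as $f_xg_y-f_yg_x$; the Jacobian condition $J\in\KK^*$ plays no role in this lemma and is only invoked in subsequent applications.
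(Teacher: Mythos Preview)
Your proof is correct and follows essentially the same route as the paper: apply the chain rule to $\dfrac{d}{dt}f(\alpha)$ and $\dfrac{d}{dt}g(\alpha)$, substitute into the left-hand side, and observe that the $\alpha'(t)$ terms cancel while $\dfrac{d}{dt}(t^{-1})=-t^{-2}$ produces the right-hand side. Your added remark that the Jacobian condition $J\in\KK^*$ plays no role here is accurate and worth keeping.
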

\begin{proof}
By chain rule,
\begin{equation}\label{L:simple2}
\dfrac{d}{dt}f(\alpha)=f_x(\alpha)\dfrac{dx}{dt}+f_y(\alpha)\dfrac{d\alpha}{dt}, \quad 
\dfrac{d}{dt}g(\alpha)=g_x(\alpha)\dfrac{dx}{dt}+g_y(\alpha)\dfrac{d\alpha}{dt}.
\end{equation}
Substituting (\ref{L:simple2}) into  LHS of (\ref{L:simple1}), simplifying and noting $dx/dt=-t^{-2}$ we get  RHS of (\ref{L:simple1}).
\end{proof}

From now on, we suppose that $(f, g)$ is a Jacobian pair.

We have the following lemma for final root.
\begin{lemma}\label{L:finalRoot}
Let   $\alpha=\sum a_j t^j$ be a root of $ f_\xi (y)$. Let
$$\delta=\max \{\ord(\alpha-\beta)\:|\: g(\beta)=0\}.$$
And let $\sigma=\sum_{j<\delta}a_j t^j+\pi t^\delta$. Then $\sigma$ is a final $\pi$-root. The final $\pi$-root $\sigma$ and the root $\alpha$ are called related each other.
\end{lemma}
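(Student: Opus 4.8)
The plan is to start from the defining property of a $\pi$-root and verify that the truncation $\sigma=\sum_{j<\delta}a_jt^j+\pi t^\delta$ is indeed a $\pi$-root of $f_\xi$, and then upgrade this to the ``final'' property by using the Jacobian condition through Lemma~\ref{L:simple}. First I would observe that, by the definition of $\delta$, every root $\beta$ of $g$ satisfies $\ord(\alpha-\beta)\le\delta$, and there is at least one root achieving equality; meanwhile, since $\alpha$ itself is a root of $f_\xi$, there are at least two roots of $f_\xi$ (namely $\alpha$ and at least one of its Galois conjugates, or simply the factor structure) agreeing with $\sigma$ through order $\delta$, so $\deg_\pi f_\sigma(\pi)\ge 1$ and $\sigma$ is a genuine $\pi$-root of $f_\xi$. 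The real content is to show $\deg_\pi f_\sigma(\pi)>1$ and that $f_\sigma(\pi)$ has no multiple roots.

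Next I would feed the root $\alpha$ of $f_\xi$ into Lemma~\ref{L:simple}: since $f_\xi(\alpha)=0$ we have $\frac{d}{dt}f(\alpha)=\frac{d}{dt}f_\xi(\alpha)=0$ as well (here one must be a little careful that $\alpha$ is a fixed Puiseux series, not varying; but $f(\alpha)\equiv\xi$ as a constant in $t$, so its $t$-derivative vanishes). Then \eqref{L:simple1} collapses to
\begin{equation*}
-f_y(\alpha)\frac{d}{dt}g(\alpha)=-J(t^{-1},\alpha)t^{-2},
\end{equation*}
so the order of $f_y(\alpha)$ plus the order of $\frac{d}{dt}g(\alpha)$ equals $\ord J(t^{-1},\alpha)-2$, which is controlled because $J\in\KK^*$. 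This identity links the order of $f_y$ at $\alpha$ to the order of $g$ at $\alpha$, and via Proposition~\ref{P:fy}(\ref{L:fy i}) the order of $f_y$ at a $\pi$-root extension is $\lambda_\sigma-\delta$; comparing these forces $\delta$ to be a point where $f_\xi$ genuinely splits into $\ge 2$ branches over $\alpha$, i.e. $\deg_\pi f_\sigma(\pi)>1$. The no-multiple-roots part should follow because if $f_\sigma(\pi)$ had a multiple root then $\delta$ would not be maximal with respect to $g$: a multiple root would allow pushing the agreement of $\alpha$ with a root of $g$ to a strictly higher order, contradicting the definition of $\delta$ (this is where Lemma~\ref{L:proportional}(\ref{L:prop ii}), relating $f_\sigma(\pi)^n$ and $g_\sigma(\pi)^m$ up to a constant, is the natural tool: a multiple root of $f_\sigma$ would be a multiple root of $g_\sigma$, but $g$ being reduced along this branch—because $\delta$ is maximal—prevents that).

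The main obstacle I anticipate is the bookkeeping that shows $\delta$ is exactly the split order and that the maximality of $\delta$ over roots of $g$ is equivalent to $f_\sigma(\pi)$ being separable. One has to be careful about whether $\delta$ could be so small that $f_\sigma$ is linear (then $\sigma$ would not be final): ruling this out is precisely where the Jacobian condition enters non-trivially, since for an \emph{arbitrary} pair $(f,g)$ the truncation need not be final. I would handle this by arguing that if $\deg_\pi f_\sigma(\pi)=1$ then $f_y(\alpha)$ has order exactly $\lambda_\sigma-\delta$ with $\lambda_\sigma$ being the order of a single linear factor times the lower branches, and then the identity $-f_y(\alpha)\frac{d}{dt}g(\alpha)=-Jt^{-2}$ together with $\ord\frac{d}{dt}g(\alpha)=\ord g(\alpha)-1$ would pin down $\ord g(\alpha)$ in a way incompatible with $\delta$ being the maximal agreement order with a root of $g$—forcing a contradiction. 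Assembling these order computations carefully is the crux; the rest is definitional.
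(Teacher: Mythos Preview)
Your parenthetical in the second paragraph \emph{is} the entire proof in the paper: if $f_\sigma(\pi)$ had a multiple root, Lemma~\ref{L:proportional}(\ref{L:prop ii}) gives $f_\sigma(\pi)^n/g_\sigma(\pi)^m\in\KK^*$, so $f_\sigma$ and $g_\sigma$ share every root; but $\pi=a_\delta$ (the $t^\delta$-coefficient of $\alpha$) is a root of $f_\sigma$, hence of $g_\sigma$, which produces a root $\beta$ of $g$ with $\ord(\alpha-\beta)>\delta$, contradicting maximality of $\delta$. The paper's proof is literally those two sentences. Everything else in your proposal---Lemma~\ref{L:simple}, Proposition~\ref{P:fy}, the order bookkeeping with $f_y(\alpha)\frac{d}{dt}g(\alpha)$---is unnecessary here (those tools are used later, in Theorem~\ref{T:mainminor}, not in this lemma).

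Two further remarks. First, your opening claim that ``there are at least two roots of $f_\xi$ \dots\ agreeing with $\sigma$ through order $\delta$'' is unjustified: only $\alpha$ itself is guaranteed, and $\deg_\pi f_\sigma\ge 1$ needs nothing more. Second, your worry about proving $\deg_\pi f_\sigma(\pi)>1$ is a fair reading of the word ``final,'' but the paper's own proof does not address it either---it only rules out multiple roots via the proportional lemma. Your attempt to exclude $\deg_\pi f_\sigma=1$ through the identity $-f_y(\alpha)\frac{d}{dt}g(\alpha)=-Jt^{-2}$ is too vague to be convincing as written (you never actually pin down the contradictory order), and in any case the downstream formulas (Theorems~\ref{T:mainminor} and~\ref{T:main4}) only need that $f_\sigma$ has no multiple roots, so this is a wrinkle in the definition rather than a genuine obstacle.
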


\begin{proof}
Let
$$ f_\xi (\sigma)=f_{\sigma}(\pi) t^{\lambda^f}+\cdots, \quad    g (\sigma)=g_{\sigma}(\pi) t^{\lambda^g}+\cdots.$$
If $\sigma$ is not final, then by the proportional lemma (Lemma \ref{L:proportional}), $f_{\sigma}(\pi) ^n/ g_{\sigma}(\pi)^m\in \KK^*$. This is a contradiction to the definition of $\delta$.

\end{proof}

\begin{definition}
Let  $\alpha$ be a root of $ f_{\xi} (y)$. If $ \ord g(\alpha)<0$ then  $\alpha$ is called a major root of $ f_{\xi} (y)$;
if  $ \ord g(\alpha)=0$ then  $\alpha$ is called a minor root of $ f_{\xi} (y)$.
When $f(y)$ has two points at infinity with the leading form $(y-a_1x)^u (y-a_2x)^v$ and $u<v$, all roots $\alpha=a_1 t^{-1}+\cdots$ of $f_\xi(y)$ are minor and the total collection of them are called the principal minor roots of $f_\xi(y)$.
\end{definition}

\begin{lemma}\label{L:rootorder}
Let  $\alpha$ be a root of $ f_{\xi} (y)$. Let $\sigma=\sum_{j<\delta}a_j t^j+\pi t^\delta$ be the related final $\pi$-root.
Then we have
\begin{romanenum}
\item \label{L:mm i}
If $\alpha$ is a major root, then $\delta<1$
\item \label{L:mm ii}
If $\alpha$ is a minor root, then $\delta>1$.
\end{romanenum}
\end{lemma}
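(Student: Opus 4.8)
The plan is to extract the order information by feeding the related final $\pi$-root $\sigma$ into the identity of Lemma \ref{L:simple}, exactly as announced in the introduction: plug a root of $f_\xi f_y$ into the two-term Jacobian expression to collapse it to one term. Concretely, write $f_\xi(\sigma)=f_\sigma(\pi)t^{\lambda^f}+\cdots$ and $g(\sigma)=g_\sigma(\pi)t^{\lambda^g}+\cdots$. By Lemma \ref{L:finalRoot}, $\sigma$ is final, so $\deg_\pi f_\sigma(\pi)>1$ and $f_\sigma(\pi)$ has no multiple roots; in particular $f_\sigma'(\pi)\neq 0$ as a polynomial, and $\deg_\pi f_\sigma'(\pi)=\deg_\pi f_\sigma(\pi)-1\geq 1$. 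Now specialize $\pi$ to a value $b$ which is a simple root of $f_\sigma'(\pi)$ but \emph{not} a root of $f_\sigma(\pi)$ (such $b$ exists by Lemma \ref{L:derivative}); then $\beta:=\sigma|_{\pi=b}$ is a genuine element of $\KK\ll t\gg$ with $\ord f_\xi(\beta)=\lambda^f$ (finite, since $f_\sigma(b)\neq 0$) and $\ord f_y(\beta)>\lambda^f-\delta$ — indeed, differentiating the Puiseux product as in the proof of Proposition \ref{P:fy}(\ref{L:fy i}) shows $f_y(\sigma)=f_\sigma'(\pi)t^{\lambda^f-\delta}+\cdots$, and specializing at the root $b$ of $f_\sigma'$ kills the leading coefficient, so $\ord f_y(\beta)>\lambda^f-\delta$.

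Next I would track orders in equation (\ref{L:simple1}) evaluated at $\alpha=\beta$. The right-hand side is $-J(t^{-1},\beta)t^{-2}$, and since $J\in\KK^*$ this has order exactly $-2$. On the left-hand side we have two terms: $g_y(\beta)\tfrac{d}{dt}f(\beta)$ and $f_y(\beta)\tfrac{d}{dt}g(\beta)$. Using $\ord \tfrac{d}{dt}h(\beta)=\ord h(\beta)-1$ for the "generic" leading behavior (which holds here because the leading coefficients in $t$ are nonzero after our choice of $b$, the characteristic being $0$), the first term has order $(\ord g_y(\beta))+\lambda^f-1$, and the second has order at least $(\ord f_y(\beta))+\lambda^g-1 > \lambda^f-\delta+\lambda^g-1$. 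Since for a final root $\lambda^f<0$ and $\deg_\pi f_\sigma(\pi)>1$, Lemma \ref{L:proportional}(\ref{L:prop i}) gives $\lambda^f:\lambda^g=m:n$, hence $\lambda^g=\tfrac{n}{m}\lambda^f$, and also $\ord g_y(\beta)=\ord g(\beta)$ up to the usual $-\,\delta$ correction relative to $g_\sigma$. Balancing: the only way the LHS can achieve order $-2$ is for the first term to have order $-2$ (the second term being strictly larger unless $\delta<1$), which pins down the relation between $\ord g(\alpha)$, $\lambda^f$, $\lambda^g$ and $\delta$.

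Finally I convert the sign of $\ord g(\alpha)$ into the sign of $\delta-1$. If $\alpha$ is a major root, $\ord g(\alpha)<0$; chasing the order bookkeeping above forces the term $f_y(\beta)\tfrac{d}{dt}g(\beta)$ to be the one of order $-2$ (or to tie), which is only compatible with $\delta<1$, giving (\ref{L:mm i}). If $\alpha$ is a minor root, $\ord g(\alpha)=0$, so $g(\alpha)$ is a nonzero constant plus higher terms, $\ord \tfrac{d}{dt}g(\beta)\geq 0$, and the term $g_y(\beta)\tfrac{d}{dt}f(\beta)$ must carry the order $-2$; comparing with the other term's order then forces $\delta>1$, giving (\ref{L:mm ii}). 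The main obstacle I anticipate is the careful justification that the order-of-derivative rule $\ord\tfrac{d}{dt}h(\beta)=\ord h(\beta)-1$ really applies to each factor — one must rule out the degenerate case where the coefficient of the leading power of $t$ has derivative $0$, i.e. where that leading exponent happens to vanish; this is where genericity of $\xi$ (so that $\lambda^f\neq 0$, $\ord f_\xi(\beta)$ is a "new" exponent) and the choice of a \emph{simple} root $b$ of $f_\sigma'$ must be invoked, and getting all these non-vanishing statements lined up simultaneously is the delicate part of the argument.
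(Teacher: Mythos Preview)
Your approach via Lemma~\ref{L:simple} is different from the paper's, which instead keeps $\pi$ generic and computes the two-variable Jacobian $\dfrac{\partial(f(\sigma),g(\sigma))}{\partial(t,\pi)}=-Jt^{\delta-2}$ (Moh's Proposition~4.1). In the major case the leading coefficient on the left is $\lambda^f f_\sigma g'_\sigma-\lambda^g f'_\sigma g_\sigma$, and its non-vanishing follows directly from finality: evaluating at a simple root $a$ of $f_\sigma$ would force $g_\sigma(a)=0$, contradicting the defining maximality of $\delta$. Matching exponents gives $\lambda^f+\lambda^g-1=\delta-2$, hence $\delta<1$. In the minor case $\lambda^f=\lambda^g=0$, the $\partial_t$-derivatives lose their leading terms, and one only needs the inequality $\varepsilon-1\leq\delta-2$ to conclude $\delta>1$.

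Your execution has a genuine gap in part~(\ref{L:mm i}). By specializing at $b$ with $f'_\sigma(b)=0$ you push the second term of (\ref{L:simple1}) to higher order, so it is the \emph{first} term $g_y(\beta)\tfrac{d}{dt}f(\beta)$ that must carry the order $-2$ (your text has this reversed). For that term to have order exactly $\lambda^f+\lambda^g-\delta-1$ you need $g'_\sigma(b)\neq 0$, which is never justified and can fail: nothing prevents every root of $f'_\sigma$ from also being a root of $g'_\sigma$. You also assert $\lambda^f<0$ for all final roots, but in the minor case $\lambda^f=\lambda^g=0$, so both leading $t$-coefficients are constants and your rule $\ord\tfrac{d}{dt}h(\beta)=\ord h(\beta)-1$ fails for $f_\xi$ and $g$ simultaneously. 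The cleanest repair is to abandon $\beta$ and plug the actual root $\alpha$ into Lemma~\ref{L:simple}: since $f_\xi(\alpha)=0$ the identity collapses to $-f_y(\alpha)\tfrac{d}{dt}g(\alpha)=-Jt^{-2}$; finality gives $\ord f_y(\alpha)=\lambda^f-\delta$ exactly, and $g_\sigma(a_\delta)\neq 0$ (by maximality of $\delta$) gives $\ord g(\alpha)=\lambda^g$, from which both cases fall out immediately. This is precisely the computation the paper carries out later in the proof of Theorem~\ref{T:mainminor}.
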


\begin{proof}
Let 
 $$ f_{\xi} (\sigma)=f_{\sigma}(\pi) t^{\lambda^f}+f_1(\pi) t^{\lambda^f+\varepsilon}  +\cdots ,\quad    g (\sigma)=g_{\sigma}(\pi) t^{\lambda^g}+g_1(\pi) t^{\lambda^g+\varepsilon}  +\cdots $$
here $\varepsilon >0$.

(\ref{L:mm i}) Since $\alpha$ is major, then $\lambda^f<0$ and  $\lambda^g<0$.
By  \cite{moh} Proposition 4.1, 
$$ (\lambda^f f_{\sigma}(\pi) g'_{\sigma}(\pi) - \lambda^g f'_{\sigma}(\pi) g_{\sigma}(\pi)) t^{\lambda^f+\lambda^g-1}+\cdots=\dfrac{\partial (f(\sigma),g(\sigma))}{\partial (t, \pi)}=-J t^{\delta-2}.$$
Since $\sigma$ is final, therefore $ \lambda^f f_{\sigma}(\pi) g'_{\sigma}(\pi) - \lambda^g f'_{\sigma}(\pi) g_{\sigma}(\pi) \neq 0$. Hence $\lambda^f+\lambda^g-1=\delta-2$. Therefore  $\delta<1$.

(\ref{L:mm ii}) Now $\lambda^f=0= \lambda^g$. We have
$$ (\varepsilon f_1(\pi) g'_{\sigma}(\pi) - \varepsilon f'_{\sigma}(\pi) g_1(\pi)) t^{\varepsilon-1}+\cdots=\dfrac{\partial (f(\sigma),g(\sigma))}{\partial (t, \pi)}=-J t^{\delta-2}.$$
Hence $\varepsilon-1\leq \delta-2$. Therefore $\delta>1$.
\end{proof}

Now we can show the relationship between our minor roots and  \cite{moh} minor discs of a split $\pi$-root.

\begin{cor}\label{C:c31}
Let $\tau =\sum_{j<\delta_r} a_j t^j+c_r t^{\delta_r}$ be a minor disc center defined as in \cite{moh} Proposition 6.1. Let 
$$E^f_\tau=\{\alpha|f_\xi (\alpha)=0  \rm{ \;\, and\; } \ord (\tau-\alpha)>\delta_r)\}.$$
Then all elements in $E^f_\tau$ are minor roots.
\end{cor}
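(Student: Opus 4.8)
The plan is to trace through the definitions of minor disc centers in \cite{moh} and connect them to the major/minor dichotomy via the order of $g$. Recall that for a split $\pi$-root $\sigma$ of $f_\xi$, Moh's Proposition 6.1 describes the roots of $f_\sigma(\pi)$ as lying in "discs"; a disc center $\tau = \sum_{j<\delta_r} a_j t^j + c_r t^{\delta_r}$ is called \emph{minor} precisely when the corresponding piece of $g$ has order $0$ along that disc, i.e. when $\ord g(\tau|_{\pi=c_r}) = 0$ in the appropriate sense (for every extension inside the disc). So the first step is to fix an element $\alpha \in E^f_\tau$, so that $f_\xi(\alpha) = 0$ and $\ord(\tau - \alpha) > \delta_r$, meaning $\alpha$ is an extension of $\tau|_{\pi = c_r}$ agreeing with it strictly beyond order $\delta_r$.

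Next I would compare $\ord g(\alpha)$ with the order of $g$ along the minor disc. Since $\alpha$ extends $\tau|_{\pi=c_r}$ past order $\delta_r$ and $\tau$ is a minor disc center, the leading behaviour of $g$ evaluated at any such extension is governed by the already-determined coefficients up through order $\delta_r$; because the disc is minor, this forces $\ord g(\alpha) = 0$. More precisely, one uses that $g(\sigma) = g_\sigma(\pi) t^{\lambda^g} + \cdots$ and that for a minor disc $\lambda^g = 0$ and $g_\sigma(c_r) \neq 0$, so evaluating at $\pi = c_r$ (and then at any further extension $\alpha$) keeps the order at $0$. Hence $\ord g(\alpha) = 0$, which is exactly the definition of $\alpha$ being a minor root of $f_\xi(y)$.

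The main obstacle I anticipate is bookkeeping: making sure that "minor disc center" as defined in \cite{moh} Proposition 6.1 really does translate into the statement $\ord g(\alpha) = 0$ for \emph{all} extensions $\alpha$ in $E^f_\tau$, rather than merely for the center $\tau|_{\pi=c_r}$ itself or for generic extensions. One must rule out the possibility that passing to a specific extension $\alpha$ (which picks out higher-order coefficients not seen by $\tau$) could drop the order of $g$ below $0$; but since $\ord g(\alpha) \le 0$ would contradict genericity of $\xi$ (as $g(\alpha) \ne 0$) combined with Lemma \ref{L:proportional} applied to the related final root of $\alpha$, and $\ord g(\alpha) < 0$ would make $\alpha$ a major root forcing the related final $\pi$-root to split at order $<1$ by Lemma \ref{L:rootorder}(\ref{L:mm i})—incompatible with $\delta_r \ge 1$ for a minor disc—the order is pinned to $0$. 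Assembling these constraints carefully is the crux; the rest is unwinding notation.
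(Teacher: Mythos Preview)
Your third paragraph is close to the paper's argument, but two key pieces are misidentified, and the second paragraph is essentially circular.

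First, your working definition of a minor disc center---``$\lambda^g = 0$ and $g_\sigma(c_r)\neq 0$''---is not what Moh uses, and in fact cannot hold at the order $\delta_r$ in question. The disc center $\tau$ sits at an order $\delta_r$ arising from a split of a major $\pi$-root, so typically $\delta_r<1$ and $\lambda^g<0$ there; $c_r$ is a root (indeed a multiple root) of $g_\sigma(\pi)$, not a non-root. So the direct computation in your second paragraph does not go through, and your first paragraph's gloss (``minor disc means $\ord g=0$ for every extension'') is precisely the conclusion of the Corollary, not its hypothesis.

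Second, in your third paragraph you invoke ``$\delta_r\ge 1$ for a minor disc'' to contradict Lemma~\ref{L:rootorder}(\ref{L:mm i}). That is the wrong quantity. What Moh's Proposition~6.1 actually gives is that \emph{inside} a minor disc the roots of $f_\xi g$ do not separate before order~$1$: setting
\[
\delta^{*}=\min\{\ord(\alpha-\beta)\;:\;\beta\text{ is a root of }f_\xi(y)g(y)\text{ with }\ord(\tau-\beta)>\delta_r\},
\]
one has $\delta^{*}\ge 1$. Since the roots of $g$ lying in the disc are among these $\beta$, the related final $\pi$-root of $\alpha$ has split order $\delta\ge\delta^{*}\ge 1$, and now Lemma~\ref{L:rootorder}(\ref{L:mm i}) rules out $\alpha$ being major. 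This is exactly the paper's proof. Once you replace ``$\delta_r\ge 1$'' by ``$\delta^{*}\ge 1$ from Moh~6.1'' your contrapositive argument via Lemma~\ref{L:rootorder} is correct; the rest of your discussion about genericity of $\xi$ and Lemma~\ref{L:proportional} is unnecessary.
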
 

\begin{proof}
Let $\alpha \in E^f_\tau$, and let
$$\delta^{*}=\min \{ \ord(\alpha-\beta)|\beta {\rm \;is \; root \; of \;} f_\xi(y)g(y) {\rm \;\; and\; } \ord (\tau-\beta)>\delta_r\}.$$
Then by \cite{moh} Proposition 6.1, $\delta^*\geq 1$. According to Lemma \ref{L:rootorder}, $\alpha$ is a minor root.
\end{proof}

\begin{definition} 
Let
 $\sigma =\sum_{j<\delta_\sigma} a_j t^j+\pi  t^{\delta_{\sigma}}$ be a final $\pi$-root of $f_\xi$.
Let polynomial $h(y) \in \KK [x,y]$. We define set
$$D^h_{\sigma}=\{ \alpha \: |\: h(\alpha)=0  \rm {\; and \;} \ord (\sigma-\alpha)=\delta_{\sigma} \}.$$
If $h(\alpha)=0$ and $\alpha \in  D^h_{\sigma}$, we call $\alpha$ and $\sigma$ are related each other. We denote $|D^h_{\sigma}|$ be the cardinal of set $D^h_{\sigma}$. 
We define sets
$$P_m=\{\sigma \: |\: \sigma  {\rm\; is \;final \; minor\; } \pi\operatorname {-root \; of \;}  f_\xi\} ,$$
$$P_M=\{\sigma \: |\: \sigma  {\rm \; is \;final \; major \;} \pi   \operatorname{-root \;of \;} f_\xi\}.$$

\end{definition}

Intersection numbers $ \I (f_{\xi} , g)$ and 
$ {\I}(f_\xi , f_y)$  have relationships with final minor $\pi$-roots.

\begin{thm} \label{T:mainminor}
Let $(f, g)$ be a Jacobian pair. Then we have
\begin{romanenum}
\item \label{T:m i}

$$ {\I}(f_\xi , f_y) \le \deg_y f -1+\sum_{\sigma \in P_m}(|D^{f_\xi}_{\sigma}|-1)(\delta_{\sigma}-1)$$
\item \label{T:m ii}
$$ {\I} (f_\xi , g)\ge 1+\sum_{\sigma \in P_m}(\delta_{\sigma}-1).$$

If there are no final minor roots, then
\item \label{T:m iii}
$${\I}(f, f_y)=\deg_y f-1$$
$${\I}(f, g)=1.$$
\end{romanenum}
\end{thm}

\begin{proof}
Let $f_\xi(\alpha)=0$. By Lemma \ref{L:simple}, 
$$-f_y(\alpha)\dfrac{d}{dt}g(\alpha)=g_y(\alpha)\dfrac{d}{dt}f(\alpha) -f_y(\alpha)\dfrac{d}{dt}g(\alpha)=-Jt^{-2}.$$
If $\alpha$ is a major root, then $\ord g(\alpha)<0$. Therefore 
$$\ord \dfrac{d}{dt}g(\alpha)=\ord g(\alpha)-1.$$
Thus
$$\ord f_y(\alpha)g(\alpha)=-1.$$
If $\alpha$ is a minor root, then $\ord g(\alpha)=0$. Define
$$\delta = \max \{\ord (\beta -\alpha)| g(\beta)=0\}.$$
By Proportion \ref{P:fy}, $\ord f_y(\alpha)=-\delta$. Therefore
$$\ord f_y(\alpha) g(\alpha)=-\delta=-1-(\delta-1).$$
Thus

\begin{align}\label{eq:m1}
{\I} (f_\xi , f_yg)&=-\ord \prod_{\{\alpha|f_\xi(\alpha)=0\}} f_y(\alpha)g(\alpha)\\
&=-\sum_{\sigma \in P_m}\sum_{\alpha \in D^{f_\xi}_\sigma} \ord f_y(\alpha)g(\alpha)
	-\sum_{\sigma \in P_M}\sum_{\alpha \in D^{f_\xi}_\sigma} \ord f_y(\alpha)g(\alpha)\notag \\ 
&=\sum_{\sigma \in P_m}|D^{f_\xi}_\sigma|(1+(\delta_\sigma-1))+\sum_{\sigma \in P_M}|D^{f_\xi}_\sigma| \notag \\
&=\deg_y f +\sum_{\sigma \in P_m}|D^{f_\xi}_\sigma|(\delta_\sigma-1). \notag
\end{align}
Now let $f_y(\beta)=0$. Note that $\ord f_\xi(\beta)=0$ if and if $\beta \in D^{f_y}_\sigma$ for some $\sigma \in P_m$ and $\ord g_y(\beta)\ge -\delta _\sigma$. By Proposition \ref{P:fy}, $|D^{f_y}_\sigma|= |D^{f_\xi}_\sigma|-1$. With the similar reasons as above, we have
\begin{align}\label{eq:m2}
{\I} (f_{y} , f_\xi g_y)&=-\ord \prod_{\{\beta|f_y(\beta)=0\}} f_\xi(\beta)g_y(\beta)\\
&\le \deg_y f_y +\sum_{\sigma \in P_m}|D^{f_y}_\sigma|(\delta_\sigma-1) \notag \\
&=\deg_y f-1 +\sum_{\sigma \in P_m}(|D^{f_\xi}_\sigma|-1)(\delta_\sigma-1). \notag
\end{align}
As $\I (f_y, f_\xi g_y)=\I (f_y, f_\xi)+\I (f_y, g_y)$, and from the Jacobian condition, ${\I} (f_y , g_y)=0$, (\ref{T:m i}) is proved.
Note $\I (f_\xi , f_yg)=\I (f_\xi , f_y)+\I (f_\xi , g)$ and subtract (\ref{eq:m2})  from (\ref{eq:m1}) ,  (\ref{T:m ii}) is proved.
If there are no final minor roots, then all above inequalities become equal and (\ref{T:m iii}) follows.
\end{proof}

\begin{cor}\label{C:equivalent}
Let $(f,g)$ be a Jacobain pair. The following are equivalent.
\begin{romanenum}
\item \label{C:m i}
$\KK (f,g)=\KK(x,y)$.
\item \label{C:m ii}
$\KK [f,g]=\KK[x,y]$.

\item \label{C:m iii}

${\I} (f , g)=1$.
\item \label{C:m iv}
$f$ has no minor roots.

\item \label{C:m v}
$ {\I}(f , f_y)=\deg_y f -1$.
\end{romanenum}

\end{cor}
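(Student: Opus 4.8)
The plan is to prove \Cref{C:equivalent} by establishing a cycle of implications, leaning on the classical theory for the parts that do not involve minor roots and on \Cref{T:mainminor} for the part that does. The equivalence of \ref{C:m i} and \ref{C:m ii} is a standard fact (a birational polynomial map of $\AA^2$ with nowhere-vanishing Jacobian that is generically one-to-one is automatically an isomorphism; this follows from Zariski's main theorem together with the fact that $\KK[f,g]$ is integrally closed when $\KK(f,g)=\KK(x,y)$, or alternatively from \cite{moh}). The implication \ref{C:m ii} $\Rightarrow$ \ref{C:m iii} is immediate: if $\KK[f,g]=\KK[x,y]$ then $(x,y)\mapsto(f,g)$ is an automorphism of $\AA^2$, so its degree is $1$, and $\I(f,g)=\deg_x\operatorname{Res}_y(f,g)$ computes precisely the number of intersection points of a generic fiber of $f$ with a generic fiber of $g$, which is $1$. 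The real content is threading \ref{C:m iii}, \ref{C:m iv}, and \ref{C:m v} into the loop.

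First I would show \ref{C:m iii} $\Leftrightarrow$ \ref{C:m iv} directly from \Cref{T:mainminor}\ref{T:m ii}: we have $\I(f_\xi,g)=1+\sum_{\sigma\in P_m}(\delta_\sigma-1)$, and by \Cref{L:rootorder}\ref{L:mm ii} every final minor $\pi$-root $\sigma$ satisfies $\delta_\sigma>1$, so each summand $\delta_\sigma-1$ is strictly positive. Hence the sum vanishes if and only if $P_m=\varnothing$. One then needs the elementary observation that $\I(f_\xi,g)=\I(f,g)$ (intersection number is unchanged by translating $f$ by the generic constant $\xi$, since $\operatorname{Res}_y(f-\xi,g)$ as a polynomial in $x$ has the same $x$-degree for generic $\xi$), and that $P_m=\varnothing$ is equivalent to ``$f$ has no minor roots'' — this last point requires knowing that the existence of a minor root $\alpha$ (a root of $f_\xi$ with $\ord g(\alpha)=0$) is equivalent to the existence of a final minor $\pi$-root, which follows from \Cref{L:finalRoot} and \Cref{L:rootorder} (the final $\pi$-root related to a minor root has split order $>1$, hence is minor in the sense of $P_m$; conversely an element of $D^{f_\xi}_\sigma$ for $\sigma\in P_m$ has $\ord g=0$). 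Next, \ref{C:m iii} $\Leftrightarrow$ \ref{C:m v} follows by combining \Cref{T:mainminor}\ref{T:m i} and \ref{T:m ii}: subtracting, $\I(f_\xi,f_y)-\deg_y f+1 = \sum_{\sigma\in P_m}(|D^{f_\xi}_\sigma|-1)(\delta_\sigma-1)$, and again each term is nonnegative — indeed strictly positive when $|D^{f_\xi}_\sigma|\ge 2$, while $|D^{f_\xi}_\sigma|\ge 2$ holds for every final $\pi$-root since by definition $\deg_\pi f_\sigma(\pi)>1$ forces at least two distinct roots once... wait, one must be slightly careful: a final root has no multiple roots and $\deg_\pi f_\sigma(\pi)>1$, so $f_\sigma(\pi)$ has $\ge 2$ distinct roots, giving $|D^{f_\xi}_\sigma|\ge 2$. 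Thus $\I(f_\xi,f_y)=\deg_y f-1$ iff $P_m=\varnothing$ iff \ref{C:m iii}, and it remains to note $\I(f_\xi,f_y)=\I(f,f_y)$ for the same genericity reason.

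To close the cycle I would finally prove \ref{C:m iii} $\Rightarrow$ \ref{C:m i} (or \ref{C:m ii}): if $\I(f,g)=1$ then a generic fiber of $f$ meets a generic fiber of $g$ in exactly one point, so the map $\AA^2\to\AA^2$, $(x,y)\mapsto(f,g)$, is generically injective, hence birational, so $\KK(f,g)=\KK(x,y)$; this is the well-known reduction (see \cite{moh}) and combined with \ref{C:m i} $\Leftrightarrow$ \ref{C:m ii} completes the chain \ref{C:m ii} $\Rightarrow$ \ref{C:m iii} $\Rightarrow$ \ref{C:m i} $\Leftrightarrow$ \ref{C:m ii}. I expect the main obstacle to be the bookkeeping around the precise relation between the three a priori different notions of ``minor root'' in play — a root $\alpha$ of $f_\xi$ with $\ord g(\alpha)=0$, an element of $P_m$, and (via \Cref{L:rootorder}) a final $\pi$-root of split order $>1$ — and in particular verifying cleanly that the set $P_m$ is empty exactly when no minor root exists; the inequalities $\delta_\sigma>1$ and $|D^{f_\xi}_\sigma|\ge 2$ that make the sums in \Cref{T:mainminor} sign-definite are the crucial structural input, and once they are in hand each individual implication is short.
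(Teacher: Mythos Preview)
Your argument is correct and uses \Cref{T:mainminor} in exactly the way the paper does for the equivalences \ref{C:m iii} $\Leftrightarrow$ \ref{C:m iv} $\Leftrightarrow$ \ref{C:m v}. The route you take for linking these to \ref{C:m i} and \ref{C:m ii}, however, differs from the paper's. The paper forms $F(\xi,\eta,x)=\operatorname{Res}_y(f-\xi,\,g-\eta)$, invokes its irreducibility (from \cite{moh}) to conclude that $F(f,g,X)$ is the minimal polynomial of $x$ over $\KK(f,g)$, and hence identifies $\I(f_\xi,g_\eta)$ directly with the field-extension degree $[\KK(x,y):\KK(f,g)]$; this gives \ref{C:m i} $\Leftrightarrow$ \ref{C:m iii} in one algebraic stroke, and \ref{C:m ii} is then obtained by observing that when there are no minor roots the leading $x$-coefficient of $F$ lies in $\KK^*$, so $x$ (and then $y$) is integral over $\KK[f,g]$. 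Your approach instead imports geometric machinery---generic injectivity implies birationality for \ref{C:m iii} $\Rightarrow$ \ref{C:m i}, and Zariski's Main Theorem (equivalently, the known ``birational case'' of the Jacobian conjecture) for \ref{C:m i} $\Rightarrow$ \ref{C:m ii}. Both routes are valid; the paper's is more self-contained and makes the identity $\I(f_\xi,g)=[\KK(x,y):\KK(f,g)]$ explicit, whereas yours has the advantage of not needing the irreducibility of the two-parameter resultant. One small caution: your passage from $\I(f_\xi,g)$ to $\I(f,g)$ by ``same $x$-degree for generic $\xi$'' only gives $\I(f_\xi,g)\ge \I(f,g)$ a priori; the paper sidesteps this by working with the generic $\xi,\eta$ throughout, which is really what statement \ref{C:m iii} intends.
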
 
\begin{proof}
Let $\xi$ and $\eta$ be symbols, let $f_\xi=f-\xi$, $g_\eta=g-\eta$. And let 
$F(\xi,\eta,x)={\rm Res}_y(f_\xi, g_\eta)$. Then $F(\xi,\eta,x)$ is irreducible (\cite{moh} page 150) and $F(f(x,y), g(x,y),x)=0$. So $F(f,g,X)$ is the define equation of $x$ over $\KK (f,g)$. Thus ${\I} (f_{\xi} , g_\mu)$ is the degree of field extension 
$[\KK (x,y):\KK (f,g)]$, which is equal to 1 if and only if there is no minor root from above theorem. We also see from the proof of the last theorem, if there is no minor root, then the coefficient of the leading term of $F(\xi , \mu, x)$ in $x$ is in $\KK^*$. The corollary is thus proved.
\end{proof}

\begin{cor}\label{C:c33}{\rm (\cite{xu})}
Let $f(x,y)$ be a polynomial, monic in y. If f is an embedded line,  then  ${\I}(f , f_y)=\deg_y f -1$. In other words, the curve f has $\deg_y f -1$ vertical tangent lines.
\end{cor}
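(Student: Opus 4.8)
The statement to prove, Corollary~\ref{C:c33}, is essentially a specialization of the equivalences in Corollary~\ref{C:equivalent}: if $f$ is an embedded line, then there is a polynomial automorphism taking $f$ to a coordinate, hence $\KK[f,g]=\KK[x,y]$ for a suitable partner $g$, and the conclusion $\I(f,f_y)=\deg_y f-1$ follows from item~(\ref{C:m v}).

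\textbf{The plan.} First I would recall what ``embedded line'' means: $f$ is an embedded line if the curve $\{f=0\}$ is isomorphic to the affine line $\AA^1$ and, more to the point here, is a coordinate, i.e.\ there exists $g\in\KK[x,y]$ with $\KK[f,g]=\KK[x,y]$. (In characteristic $0$, by the Abhyankar--Moh--Suzuki theorem, any polynomial embedding of $\AA^1$ into $\AA^2$ is rectifiable, so an embedded line is automatically a coordinate; I would cite this or take it as the working definition following \cite{xu}.) Then $(f,g)$ is a coordinate pair, in particular a Jacobian pair after possibly rescaling $g$ so that $f_xg_y-f_yg_x\in\KK^*$. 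Now I invoke Corollary~\ref{C:equivalent}: since $\KK[f,g]=\KK[x,y]$, condition~(\ref{C:m ii}) holds, hence condition~(\ref{C:m v}) holds, which is exactly $\I(f,f_y)=\deg_y f-1$.

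\textbf{The second assertion} --- that $f$ has $\deg_y f-1$ vertical tangent lines --- I would derive by interpreting $\I(f,f_y)$ geometrically: $\operatorname{Res}_y(f,f_y)$ is (up to a constant) the discriminant of $f$ as a polynomial in $y$, and its $x$-degree counts, with multiplicity, the values of $x$ at which the fiber $\{f=c\}\cap\{x=x_0\}$ is singular, i.e.\ where a vertical line is tangent to the curve $f=0$; one has to be slightly careful about tangencies ``at infinity,'' but since $f$ is monic in $y$ of degree $\deg_y f$, the leading coefficient of $\operatorname{Res}_y(f,f_y)$ in $x$ does not degenerate, so the count is honest. Thus the number of vertical tangent lines equals $\I(f,f_y)=\deg_y f-1$.

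\textbf{The main obstacle} is not a hard computation but a matter of correctly matching definitions: one must make sure the ``embedded line'' hypothesis really supplies a polynomial $g$ with $\KK[f,g]=\KK[x,y]$ (this is where AMS / rectifiability enters, and it is the only nontrivial external input), and one must check that the genericity of $\xi$ used throughout Section~4 does no harm here --- the quantity $\I(f,f_y)$ is independent of $\xi$ because $f_y=(f_\xi)_y$, so passing between $f$ and $f_\xi$ is harmless. Once those bookkeeping points are settled, the corollary is immediate from Corollary~\ref{C:equivalent}(\ref{C:m v}).
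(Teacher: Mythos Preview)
Your proposal is correct and follows essentially the same route as the paper: invoke the Abhyankar--Moh Epimorphism Theorem to produce a partner $g$ with $\KK[f,g]=\KK[x,y]$ (and $J(f,g)\in\KK^*$), then apply Corollary~\ref{C:equivalent} to conclude $\I(f,f_y)=\deg_y f-1$. Your additional remarks on the geometric interpretation of $\I(f,f_y)$ and on the harmlessness of the generic $\xi$ are extra commentary the paper omits, but the core argument is the same.
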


\begin{proof}
By Abhyankar-Moh's Epimorphism Theorem (\cite{am}), there exits polynomial $g(x,y)$ such that Jacobian $J(f,g) \in \KK^*$
and $\KK [x,y]=\KK [f,g]$. By above corollary, $ {\I}(f , f_y)=\deg_y f -1$.

\end{proof}

\section{$ {\I}(f_\xi , g)$ Expressed by Final Major Roots}
Now we express $ {\I}(f_\xi , g)$ with final major $\pi$-roots. We define

\begin{align*}
 {\I}_M(f, g)&=-\sum_{\sigma \in P_M}|D^{f}_{\sigma}|\lambda^g_\sigma\\
&=\dfrac{\deg_yg}{\deg_yf+\deg_yg}\sum_{\sigma \in P_M}|D^{f}_{\sigma}|(1-\delta_\sigma).
\end{align*}

\begin{thm} \label{T:main4}
Let $(f, g)$ be a Jacobian pair. Then we have
$$ {\I}(f_\xi , g)= {\I}_M(f , g)$$
\end{thm}

\begin{proof}
Let $\deg_y f=m$ and  $\deg_y g=n$.
Let $f_\xi(\alpha)=0$. 
If $\alpha$ is a major root, then $\ord g(\alpha)<0$. Let $\sigma$ be its final $\pi$-root. And let
$$ f(\sigma)=f_\sigma(\pi) t^{\lambda^f_\sigma}+\cdots, \quad g(\sigma)=g_\sigma(\pi) t^{\lambda^g_\sigma}+\cdots.$$
By \cite{moh} Proposition 4.6, 
$$-\lambda^g_\sigma=n\dfrac{1-\delta_\sigma}{n-M_1}=\dfrac{n}{m+n}(1-\delta_\sigma).$$
If $\alpha$ is a minor root, then $\ord g(\alpha)=0$. Therefore

\begin{align*}
 {\I}(f_\xi , g)&=-\ord \prod_{\{\alpha | f_\xi(\alpha)=0\} }g(\alpha)\\
&=-\sum_{\sigma \in P_M}\sum_{\alpha \in D^{f}_{\sigma} }\ord g(\alpha)     -\sum_{\sigma \in P_m}\sum_{\alpha \in D^{f_\xi}_{\sigma} }\ord  g(\alpha)\\
&=-\sum_{\sigma \in P_M}|D^{f}_{\sigma}|\lambda^g_\sigma\\
&=\dfrac{n}{m+n}\sum_{\sigma \in P_M}|D^{f}_{\sigma}|(1-\delta_\sigma).
\end{align*}

\end{proof}

Zhang \cite{zhang} gives estimate for the degree of the field extension $[\, \KK(x,y):\KK(f,g)] \le \min(\deg f , \deg g)$. Using the theorem above, we can give a better estimate.

\begin{cor}\label{C:c41}
Let $(f,g)$ be a Jacobian pair. Let $\deg_y f >1$ and  $\deg_y g >1$. Then the field extension degree $[\, \KK(x,y):\KK(f,g)]<\dfrac{\deg_yf \deg_yg}{\deg_yf+\deg_yg}$.
\end{cor}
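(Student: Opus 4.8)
The plan is to deduce the bound directly from Theorem~\ref{T:main4}, combined with Corollary~\ref{C:equivalent} to control the size of $P_M$. Write $m=\deg_y f$ and $n=\deg_y g$. By Corollary~\ref{C:equivalent}, since $[\,\KK(x,y):\KK(f,g)\,]={\I}(f_\xi,g)$ and we are told nothing forces this to be $1$, but in any case ${\I}(f_\xi,g)=1+\sum_{\sigma\in P_m}(\delta_\sigma-1)$; and from Theorem~\ref{T:main4}, ${\I}(f_\xi,g)=\dfrac{n}{m+n}\sum_{\sigma\in P_M}|D^{f_\xi}_\sigma|(1-\delta_\sigma)$. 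So the first step is to observe that $\sum_{\sigma\in P_M}|D^{f_\xi}_\sigma|$ counts exactly the major roots of $f_\xi$, which is at most the total number of roots, $m$; and since each final major $\pi$-root has $\delta_\sigma<1$ by Lemma~\ref{L:rootorder}(\ref{L:mm i}), each factor $(1-\delta_\sigma)$ is positive. The crude bound $1-\delta_\sigma\le 1$ would only give ${\I}(f_\xi,g)\le \frac{nm}{m+n}$ with equality not obviously excluded, so the real work is to show the inequality is strict.

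The key step I expect is to rule out equality. Equality in the chain above would require: (i) \emph{every} root of $f_\xi$ is a major root (so $P_m=\varnothing$), and (ii) for every final major $\pi$-root $\sigma$ one has $\delta_\sigma\to 0$ in the strongest sense, i.e.\ $\delta_\sigma\le 0$ forced down to the point that $\sum|D^{f_\xi}_\sigma|(1-\delta_\sigma)=m$ needs each $\delta_\sigma=0$. But (i) already gives a contradiction: if $f_\xi$ has no minor roots, then by Corollary~\ref{C:equivalent} we would have ${\I}(f,g)=1$, i.e.\ $[\,\KK(x,y):\KK(f,g)\,]=1$, which is $\le 1 < \frac{mn}{m+n}$ whenever $m,n>1$ (since $\frac{mn}{m+n}\ge \frac{2mn}{m+n}\cdot\frac12$ and $\frac{1}{m}+\frac1n<1$ forces $\frac{mn}{m+n}>1$). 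So in the interesting case $P_m\neq\varnothing$, which means $\sum_{\sigma\in P_M}|D^{f_\xi}_\sigma|\le m-1$ strictly, since at least one root is minor. Then
$$
{\I}(f_\xi,g)=\frac{n}{m+n}\sum_{\sigma\in P_M}|D^{f_\xi}_\sigma|(1-\delta_\sigma)\le \frac{n}{m+n}\sum_{\sigma\in P_M}|D^{f_\xi}_\sigma|\le \frac{n(m-1)}{m+n}<\frac{mn}{m+n},
$$
using $1-\delta_\sigma<1$ is not even needed here — the count alone does it. If instead $P_m=\varnothing$ we fall back on the Corollary~\ref{C:equivalent} argument giving ${\I}=1<\frac{mn}{m+n}$.

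The main obstacle is bookkeeping the two cases cleanly and making sure the strict inequality $\sum_{\sigma\in P_M}|D^{f_\xi}_\sigma|\le m-1$ is justified: this is exactly the statement that when $f_\xi$ has a minor root, not all $m$ roots can lie in the major discs $D^{f_\xi}_\sigma$ with $\sigma\in P_M$. This follows because each root $\alpha$ of $f_\xi$ is related (Lemma~\ref{L:finalRoot}) to a unique final $\pi$-root, which is major or minor according to $\ord g(\alpha)$, and the sets $D^{f_\xi}_\sigma$ partition the roots; so the major discs capture precisely the major roots and miss the (nonempty) set of minor roots. Assembling these observations yields the strict bound $[\,\KK(x,y):\KK(f,g)\,]={\I}(f_\xi,g)<\dfrac{\deg_y f\,\deg_y g}{\deg_y f+\deg_y g}$.
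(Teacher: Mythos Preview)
Your route to the strict inequality is genuinely different from the paper's. The paper does not split into cases at all: it simply invokes Moh's Proposition~5.3 for the two-sided bound $0<\delta_\sigma<1$ on every final major $\pi$-root, so that each factor $(1-\delta_\sigma)$ is \emph{strictly} less than $1$, and then writes
\[
\sum_{\sigma\in P_M}|D^{f_\xi}_\sigma|(1-\delta_\sigma)\;<\;\sum_{\sigma\in P_M}|D^{f_\xi}_\sigma|\;\le\;\deg_y f.
\]
That is the whole proof. You instead discard the lower bound $\delta_\sigma>0$ and get strictness from the \emph{right} side of the chain, by observing that when $P_m\neq\varnothing$ at least one root of $f_\xi$ is minor, so $\sum_{\sigma\in P_M}|D^{f_\xi}_\sigma|\le m-1$. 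In that branch your argument is clean and correct, and it has the mild advantage of not appealing to Moh's lower bound on $\delta_\sigma$.

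The gap is in your $P_m=\varnothing$ branch. You assert $1<\dfrac{mn}{m+n}$ ``whenever $m,n>1$'', justified by $\tfrac1m+\tfrac1n<1$. But at $m=n=2$ one has $\tfrac1m+\tfrac1n=1$ and $\dfrac{mn}{m+n}=1$, so the strict inequality you need collapses to $1<1$. (Concretely, $f=y^2+x$, $g=y^2+y+x$ is a Jacobian pair with $\KK[f,g]=\KK[x,y]$, hence $P_m=\varnothing$, $\I(f_\xi,g)=1$, and $\dfrac{mn}{m+n}=1$.) The paper's argument never needs the arithmetic inequality $1<\dfrac{mn}{m+n}$ because its strictness comes entirely from $\delta_\sigma>0$ inside the sum. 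If you want to keep your dichotomy, you must either supply an argument excluding $P_m=\varnothing$ under the standing hypotheses, or fall back on $\delta_\sigma>0$ in that branch after all.
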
 
\begin{proof}
We only need note that in the proof of the last theorem, $0<\delta_\sigma<1$  (\cite{moh} Proposition 5.3) and 
$$\sum_{\sigma \in P_M}|D^{f}_{\sigma}|(1-\delta_\sigma) < \sum_{\sigma \in P_M}|D^{f}_{\sigma}|\le \deg_y f.$$
\end{proof}

We define
$${\I}_m(f, g)=1+\sum_{\sigma \in P_m}(\delta_\sigma-1).$$
By combining Theorem \ref{T:mainminor} and Theorem \ref{T:main4}, we have

\begin{cor}\label{C:c42}
Let $(f,g)$ be a Jacobian pair. Then 
$${\I}_M(f, g)\ge {\I}_m(f, g).$$

\end{cor} 

\section{Applications}
We apply Corollary \ref{C:c42} to special cases. We show that these formulas work in some cases but are not effect in other cases. 

\subsection{Case (75, 50)} \label{SS:case75} We take case (75, 50) as an example.
According to \cite{moh}, page 202, there are two sub cases  [For (75, 50) second sub case, $\delta_1=1/3$ should be $\delta_1=2/3$]. We only show the second one here, as it provides more splitting possibilities, therefore more complicated. i.e. We have 
$$n=75, \quad m=50, \quad M_2=55, \quad M_3=73, \quad V_3=4, \quad V_2=2, \quad \delta_2=1/5, \quad \delta_1=2/3.$$
Let $\sigma_2$ be the major $\pi$-root with the split order $\delta_2$. There are two possibilities of splittings for $\sigma_2$.
\begin{romanenum}
\item \label{L:case75 i} 
$$f(\sigma_2)=a((\pi^5-c_1)(\pi^5-c_2))^4 t^{-2}+\cdots, \quad g(\sigma_2)=b((\pi^5-c_1)(\pi^5-c_2))^6 t^{-3}+\cdots.$$
All these 10 roots of $f_{\sigma_2}(\pi)$ and $g_{\sigma_2}(\pi)$ are major. Each of them extends  to final major $\pi$-roots $\sigma_{1i}$ with split order $\delta_1$:
$$f(\sigma_{1i})=a_{0i}\pi (\pi^3-a_{1i}) t^{-2/15}+\cdots, \quad g(\sigma_{1i})=b_{0i} (\pi^3-b_{1i})(\pi^3-b_{2i} ) t^{-1/5}+\cdots $$
for $  i=1,\dots,10.$
We have only one final minor $\pi$-root with split order $V_3/U_3=4$. Therefore,
from final minor roots, we have $ {\I_m}(f, g) = 1+(4-1)=4$, while final major roots give

$ {\I_M}(f, g)=10 \times 4 \times 1/5=8$.

\noindent They do not work in this case.

\item \label{L:case75 ii}
$$f(\sigma_2)=a((\pi^5-c_1)^2(\pi^5-c_2) (\pi^5-c_3))^2 t^{-2}+\cdots,$$
$$g(\sigma_2)=b((\pi^5-c_1)^2(\pi^5-c_2) (\pi^5-c_3))^3 t^{-3}+\cdots.$$
In this case, 5 roots from $(\pi^5-c_1)$ are major and extend to final major $\pi$-roots $\sigma_{1i}$ with split order $\delta_1$:
$$f(\sigma_{1i})=a_{0i}\pi (\pi^3-a_{1i}) t^{-2/15}+\cdots, \quad g(\sigma_{1i})=b_{0i} (\pi^3-b_{1i})(\pi^3-b_{2i} ) t^{-1/5}+\cdots $$
for $  i=1,\dots,5;$
while 10 roots from $(\pi^5-c_2) (\pi^5-c_3)$ are minor and extend to final minor $\pi$-roots with split order  $6/5$. So we have

$ {\I_m}(f , g)= 1+(4-1)+10\times (6/5-1)=6$ from final minor roots while

$ {\I}_M(f , g)=5 \times 4 \times 1/5=4$.

\noindent This is a contradiction and we can rule out this case.
\end{romanenum}

\subsection{Case (84, 56)} \label{SS:case84}  From \cite{moh} page 202, there are two cases.
\begin{romanenum}
\item \label{L:case84 i} 
$$n=84, \quad m=56, \quad M_2=64, \quad M_3=82, \quad V_3=3, \quad V_2=2, \quad \delta_2=2/7, \quad \delta_1=16/21.$$
Let $\sigma_2$ be the major $\pi$-root with the split order $\delta_2$. 
$$f(\sigma_2)=a((\pi^7-c_1)^2(\pi^7-c_2))^2 t^{-2}+\cdots, \quad g(\sigma_2)=b((\pi^7-c_1)^2(\pi^7-c_2))^3t^{-3}+\cdots.$$
All that 7 roots of $(\pi^7-c_1)$ are major, each of them extends  to a final major $\pi$-root $\sigma_{1i}$ with split order $\delta_1$:
$$f(\sigma_{1i})=a_{0i}\pi (\pi^3-a_{1i}) t^{-2/21}+\cdots, \quad g(\sigma_{1i})=b_{0i} (\pi^3-b_{1i})(\pi^3-b_{2i} ) t^{-3/21}+\cdots $$
for $  i=1,\dots,7.$

\noindent All that 7 roots of $(\pi^7-c_2)$ are minor, each of them extends to a final minor $\pi$-root with order $9/7$. We also have  one principal final minor $\pi$-root with split order $V_3/U_3=3$. Therefore,
from final minor roots, we have $ {\I}_m(f , g) = 1+7\times (9/7-1)+(3-1)=5$, while final major roots give
$ {\I}_M(f, g)=4 \times 7 \times 3/21=4$, and that is a contradiction. So we can rule out this case.

\item \label{L:case84 ii}
$$n=84, \quad m=56, \quad M_2=72, \quad M_3=82, \quad V_3=3, \quad V_2=5, \quad \delta_2=1/4, \quad \delta_1=7/12.$$
$$f(\sigma_2)=a((\pi^4-c_1)^5\pi)^2 t^{-14/4}+\cdots, \quad g(\sigma_2)=b((\pi^4-c_1)^5\pi)^3 t^{-21/4}+\cdots.$$
In this case, 4 roots from $(\pi^4-c_1)$ are major and extend to final major $\pi$-roots $\sigma_{1i}$ with split order $\delta_1$:
$$f(\sigma_{1i})=a_{0i}\pi (\pi^3-a_{1i}) (\pi^3-a_{2i}) (\pi^3-a_{3i}) t^{-2/12}+\cdots, $$
$$ g(\sigma_{1i})=b_{0i} (\pi^3-b_{1i})(\pi^3-b_{2i} )(\pi^3-b_{3i})(\pi^3-b_{4i} ) (\pi^3-b_{5i}) t^{-3/12}+\cdots $$
for $  i=1,\dots,4;$
while 1 root from $(\pi-0)$ is minor and extend to final minor $\pi$-root with split order  $2$. So we have
$ {\I}_m(f , g)= 1+(2-1)+(3-1)=4$ from final minor roots while
$ {\I}_M(f , g)=4 \times 10 \times 3/12=10$. We can not rule out this case.
\end{romanenum}

\section{Splitting  of the Principal Minor Roots}
In this section we will show that the principal minor roots are not splitting at order 1 for all effective quasi-approximate roots.
We need a few preliminaries first.
\subsection{Planar semigroups}  For Abhyankar-Moh planar semigroup theory and characteristic $\delta$-sequence, we refer \cite{am}, \cite{ss} and \cite{xu2}. We recall definitions of $q$-sequence and $M$-sequence
$$M_1=-\delta_1$$
$$q_i=\delta_{i-1} \frac{d_{i-1}}{d_i}-\delta_i, \quad M_i=M_{i-1}+q_i $$
for $ i=2,\dots,h$.

For an integer subset $A$, $\Gamma(A)$ denotes the semigroup generated by all elements in $A$.
\begin{lemma}\label{L:semigroup}
Let  $\delta=(\delta_0, \dots,\delta_h)\; (h\ge 2)$ be a characteristic $\delta$-sequence and $k$ be an integer with $2\le k \le h$. Then we have
\begin{romanenum}
\item \label{L:semigroup i}
$\delta_k+M_k \in \Gamma (\delta_1,\dots,\delta_{k-1})$,
\item \label{L:semigroup ii}
$\delta_k+M_k -\delta_0 \notin \Gamma (\delta_0,\dots,\delta_{k-1})$.
\end{romanenum}
\end{lemma}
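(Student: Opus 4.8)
The plan is to reduce both parts to one telescoping identity. Writing $q_j=\delta_{j-1}\tfrac{d_{j-1}}{d_j}-\delta_j$ and using $M_1=-\delta_1$, one has for every $j\ge 2$
\[
M_j+\delta_j=M_{j-1}+q_j+\delta_j=M_{j-1}+\delta_{j-1}\tfrac{d_{j-1}}{d_j}=(M_{j-1}+\delta_{j-1})+\Bigl(\tfrac{d_{j-1}}{d_j}-1\Bigr)\delta_{j-1},
\]
so, since $M_1+\delta_1=0$, the terms telescope and
\[
\delta_k+M_k=\sum_{i=1}^{k-1}\Bigl(\tfrac{d_i}{d_{i+1}}-1\Bigr)\delta_i .
\]
Because $\delta=(\delta_0,\dots,\delta_h)$ is a characteristic $\delta$-sequence, $d_{i+1}=\gcd(d_i,\delta_i)$ is a \emph{proper} divisor of $d_i$, hence $d_i/d_{i+1}\ge 2$ and each coefficient $\tfrac{d_i}{d_{i+1}}-1$ is a positive integer. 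Thus the identity displays $\delta_k+M_k$ as a non-negative integral combination of $\delta_1,\dots,\delta_{k-1}$, which is precisely \eqref{L:semigroup i}.

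For \eqref{L:semigroup ii} I would bring in the normal-form theory of characteristic $\delta$-sequences from \cite{am}, \cite{ss}, \cite{xu2}: for each $i$, $\tfrac{d_i}{d_{i+1}}\delta_i\in\Gamma(\delta_0,\dots,\delta_{i-1})$, and consequently every element of $\Gamma(\delta_0,\dots,\delta_{k-1})$ admits a \emph{reduced} representation $\sum_{i=0}^{k-1}c_i\delta_i$ with $c_0\ge 0$ and $0\le c_i<\tfrac{d_i}{d_{i+1}}$ for $1\le i\le k-1$, this reduced representation being unique (the uniqueness is the standard successive-$\gcd$ descent). Note first that the right-hand side of the telescoping identity is already such a reduced representation of $\delta_k+M_k$, with $\delta_0$-coefficient $0$. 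Now suppose, for contradiction, that $\delta_k+M_k-\delta_0\in\Gamma(\delta_0,\dots,\delta_{k-1})$, and pick a reduced representation $\sum_{i=0}^{k-1}c_i\delta_i$ of it. Subtracting it from the telescoping identity yields
\[
\sum_{i=1}^{k-1}\Bigl(\tfrac{d_i}{d_{i+1}}-1-c_i\Bigr)\delta_i=(c_0+1)\,\delta_0 .
\]
Since $0\le c_i\le \tfrac{d_i}{d_{i+1}}-1$, each coefficient on the left lies in $[0,\tfrac{d_i}{d_{i+1}}-1]$, so the left side is a reduced representation of $(c_0+1)\delta_0$ whose $\delta_0$-coefficient is $0$; but the obvious representation $(c_0+1)\delta_0$ is also reduced (its $\delta_0$-coefficient is $c_0+1\ge 0$, the others $0$). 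Uniqueness of reduced representations forces $c_0+1=0$, contradicting $c_0\ge 0$; this proves \eqref{L:semigroup ii}.

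The only real input, and the step I expect to need the most care, is \eqref{L:semigroup ii}: everything there rests on the reduced-representation normal form for $\Gamma(\delta_0,\dots,\delta_{k-1})$, i.e. on $\tfrac{d_i}{d_{i+1}}\delta_i\in\Gamma(\delta_0,\dots,\delta_{i-1})$ together with the uniqueness of the reduced form, which are the substance of Abhyankar--Moh planar-semigroup theory and must be quoted with the sign conventions of the $\delta_i$ in mind. If one prefers a self-contained argument, the same conclusion follows by induction on $k$: reducing modulo $d_{k-1}$ (which divides $\delta_0,\dots,\delta_{k-2}$) shows that in any representation of $\delta_k+M_k-\delta_0$ the coefficient of $\delta_{k-1}$ is $\equiv -1\pmod{d_{k-1}/d_k}$, hence $\ge \tfrac{d_{k-1}}{d_k}-1$; subtracting $\bigl(\tfrac{d_{k-1}}{d_k}-1\bigr)\delta_{k-1}$ and absorbing the remaining multiples of $\tfrac{d_{k-1}}{d_k}\delta_{k-1}$ into $\Gamma(\delta_0,\dots,\delta_{k-2})$ leaves $\delta_{k-1}+M_{k-1}-\delta_0\in\Gamma(\delta_0,\dots,\delta_{k-2})$, against the inductive hypothesis, the base case $k=2$ being the computation above with $\delta_2+M_2=\bigl(\tfrac{d_1}{d_2}-1\bigr)\delta_1$.
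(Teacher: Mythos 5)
Your proof is correct and follows essentially the same route as the paper: part (\ref{L:semigroup i}) rests on exactly the same telescoping identity $\delta_k+M_k=\sum_{i=1}^{k-1}\bigl(\tfrac{d_i}{d_{i+1}}-1\bigr)\delta_i$, and part (\ref{L:semigroup ii}) rests on the standard-expansion normal form for $\Gamma(\delta_0,\dots,\delta_{k-1})$, which the paper simply quotes as Lemma~2.3 of \cite{xu2}. The only difference is that you derive the non-membership from uniqueness of reduced representations (and sketch a self-contained descent) where the paper cites that lemma directly.
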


\begin{proof}
$\delta_1 +M_1=0$.
As 
$$\delta_k+M_k={\delta_{k-1}}\frac{d_{k-1}}{d_k}-q_k+M_k=\delta_{k-1}(\frac{d_{k-1}}{d_k}-1)+\delta_{k-1}+M_{k-1}$$
by induction, we have
 $$\delta_k+M_k=\delta_{k-1}(\frac{d_{k-1}}{d_k}-1)+\cdots +\delta_{1}(\frac{d_{1}}{d_2}-1)$$
(\ref{L:semigroup i}) is proved.
Now
$$\delta_k+M_k-\delta_0=\delta_{k-1}(\frac{d_{k-1}}{d_k}-1)+\cdots +\delta_{1}(\frac{d_{1}}{d_2}-1)-\delta_0$$
is a standard expansion by the $\delta$-sequence, by \cite{xu2} Lemma 2.3, (\ref{L:semigroup ii}) is proved.
\end{proof}

\subsection{One kind of special ordinary differential equations of polynomials}  After analyzing the proof of the second part of \cite{moh} Proposition A.2, we have

\begin{lemma}\label{L:semigroupn}
Let  $\deg p(\pi)=m$ and $\deg q(\pi)=(l-1)m+1$. Suppose $p(\pi)$, $q(\pi)$ satisfy the following equation
$$D(m, m(l-1), p(\pi), q(\pi))=cp(\pi)^l$$
where $c\in \KK^*$. Then $q(\pi)=\frac{c}{m} (\pi-a) p(\pi)^{l-1}$ for some $a\in \KK$.
\end{lemma}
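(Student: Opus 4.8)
The plan is to recognise $D(m,m(l-1),p,q)$ as a constant multiple of the derivative of $q/p^{l-1}$ and then integrate. Recall that the operator is $D(a,b,p,q)=a\,p\,q'-b\,p'\,q$, so the hypothesis reads
$$m\bigl(p\,q'-(l-1)\,p'\,q\bigr)=c\,p^{l}.$$
The first step is the elementary identity in the rational function field $\KK(\pi)$,
$$\frac{d}{d\pi}\!\left(\frac{q}{p^{l-1}}\right)=\frac{q'\,p^{l-1}-(l-1)\,p^{l-2}\,p'\,q}{p^{2(l-1)}}=\frac{p\,q'-(l-1)\,p'\,q}{p^{l}},$$
which, combined with the displayed hypothesis, gives at once
$$\frac{d}{d\pi}\!\left(\frac{q}{p^{l-1}}\right)=\frac{c}{m}.$$

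Next I would integrate. The function $\dfrac{q}{p^{l-1}}-\dfrac{c}{m}\,\pi$ is a rational function with vanishing derivative, and since $\KK$ has characteristic $0$ such a function must be a constant $C\in\KK$: writing it in lowest terms $P/Q$, the relation $P'Q=PQ'$ forces $Q\mid Q'$, hence $Q'=0$ on degree grounds, so the function is a polynomial and therefore constant. Consequently
$$q=\Bigl(\tfrac{c}{m}\,\pi+C\Bigr)\,p^{l-1}=\tfrac{c}{m}\,(\pi-a)\,p^{l-1},\qquad a:=-mC/c,$$
which is exactly the asserted conclusion; the degree identity $\deg q=1+(l-1)m$ then holds automatically and matches the hypothesis, so nothing has been used beyond the differential equation itself.

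The one step that requires genuine care --- and the place where characteristic $0$ is essential --- is this integration, i.e.\ the fact that a rational function over $\KK$ with constant derivative is a linear polynomial; this is precisely what the lowest-terms argument above supplies. Everything preceding it is a purely formal rewriting of the given equation, and the hypotheses on the degrees of $p$ and $q$ enter only as an internal consistency check rather than as an ingredient of the proof. It remains only to dispose of the degenerate case $l=1$ separately, where $D(m,0,p,q)=m\,p\,q'=c\,p$ forces $q'=c/m$ and hence $q=\tfrac{c}{m}(\pi-a)$ directly.
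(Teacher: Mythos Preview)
Your proof is correct. The paper itself does not give an argument: it simply states that the lemma follows ``after analyzing the proof of the second part of \cite{moh} Proposition~A.2,'' so there is nothing to compare line by line. Your route---recognising that $D(m,m(l-1),p,q)=m\bigl(pq'-(l-1)p'q\bigr)=m\,p^{l}\,\dfrac{d}{d\pi}\!\left(\dfrac{q}{p^{l-1}}\right)$ and then integrating the constant $c/m$---is a clean, self-contained alternative to invoking Moh's appendix. The only substantive step, that a rational function over a field of characteristic~$0$ with vanishing derivative is constant, you handle correctly via the lowest-terms argument. The degree hypotheses are indeed not needed for the deduction itself (they guarantee only that the resulting $q$ has the expected degree), and your separate treatment of $l=1$ is a harmless redundancy since the main argument already covers it.
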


\subsection{Quasi-approximate roots of a polynomial curve }  Let $(f, g)$ be a Jacobian pair,  monic in $y$ and $f$ has two points at infinity. Let $\deg f=m$ and $\deg g=n$. Consider $y$ as a parameter, $(f, g)$ defines a polynomial curve. From \cite{moh}, there are effective index  $s>2$ and quasi-approximate roots
$T_i(f, g)\in \KK[f, g]$  
with $T_0=g$, $T_1=f$ and $\deg T_i=-\mu_i$ for $i=0, \dots,s$.

Let $d_i=\gcd (-\mu_0, \dots,-\mu_{i-1}) $ for $ i=1,\dots, s+1$. Then $( -\mu_0/d_{s+1},\dots,-\mu_s/d_{s+1})$  is a $\delta$-sequence.
Let the  multiplicity of the principal minor roots of $f$ is $m\frac{u_s}{d_s}$, here $u_s<v_s=d_s-u_s$. Then the multiplicities of the principal minor roots of $ T_0, \dots, T_{s-1}, T_s$ are
$$ -\mu_0\frac{u_s}{d_s},\dots,-\mu_{s-1}\frac{u_s}{d_s}, (-\mu_s-2)\frac{u_s}{d_s}+1$$ respectively.

\begin{prop}\label{P:principalminorrt}
 Let  $\sigma_1$ be the $\pi$-root of order $1$ for the principal minor roots of $f$. Let

\begin{align*}
T_0(\sigma_1)&=T_{0,\sigma_1}(\pi) t^{\frac{v_s-u_s}{d_s}\mu_0}+\cdots\\
&\vdots\\
T_{s-1}(\sigma_1)&=T_{s-1,\sigma_1}(\pi) t^{\frac{v_s-u_s}{d_s}\mu_{s-1}}+\cdots\\
T_{s}(\sigma_1)&=T_{s,\sigma_1}(\pi) t^{\frac{v_s-u_s}{d_s}(\mu_s+2)}+\cdots
\end{align*}
Then $ T_{0,\sigma_1}(\pi),\dots,T_{s,\sigma_1}(\pi)$ are powers of a common linear polynomial.
\end{prop}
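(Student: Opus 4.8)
The plan is to exploit Lemma \ref{L:simple} together with the proportionality built into Lemma \ref{L:proportional} and the semigroup combinatorics of Lemmas \ref{L:semigroup} and \ref{L:semigroupn}. First I would record that $\sigma_1$ is the $\pi$-root of order $1$ obtained from the principal minor roots, so $\delta_{\sigma_1}=1$ and, by Lemma \ref{L:rootorder}(\ref{L:mm ii}) applied in reverse, the fact that we are sitting \emph{at} order $1$ means the relevant leading orders $\lambda$ of $f_\xi$ and of $g$ on $\sigma_1$ vanish after the normalization chosen in the statement — this is why the exponents in the display are written in the scaled form $\tfrac{v_s-u_s}{d_s}\mu_i$ and $\tfrac{v_s-u_s}{d_s}(\mu_s+2)$. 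The key algebraic input is Lemma \ref{L:simple}: feeding $\alpha=\sigma_1|_{\pi=\,\cdot}$, or better the $\pi$-root $\sigma_1$ itself, into equation \eqref{L:simple1} with $f$ replaced successively by consecutive quasi-approximate roots $T_i$ and $T_{i+1}$ (all of which lie in $\KK[f,g]$, hence have Jacobian with $g$ a nonzero scalar multiple of $J$, up to the degree bookkeeping in Section 7.3), one gets a Wronskian-type identity
$$
T_{i,\sigma_1}'(\pi)\,T_{i+1,\sigma_1}(\pi)-T_{i,\sigma_1}(\pi)\,T_{i+1,\sigma_1}'(\pi)=c\,\bigl(\text{a power of }T_{i,\sigma_1}(\pi)\bigr)
$$
for a suitable constant $c\in\KK^*$, where the exponent on the right is forced by the $\delta$-sequence relation $-\mu_{i+1}/d_{i+1}$ versus $-\mu_i/d_i$ together with Lemma \ref{L:semigroup}(\ref{L:semigroup i}). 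Concretely the degrees match the hypotheses of Lemma \ref{L:semigroupn}: $\deg T_{i,\sigma_1}=m_i$ and $\deg T_{i+1,\sigma_1}=(l-1)m_i+1$ for the appropriate $l=d_i/d_{i+1}$.

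With that identity in hand I would argue by descending induction on $i$ from $i=s$ down to $i=0$. For the base step, the multiplicity list in Section 7.3 shows that $T_{s,\sigma_1}(\pi)$ has degree one more than a multiple of $\deg T_{s-1,\sigma_1}(\pi)$, so after checking the differential equation $D(m,m(l-1),p,q)=cp^l$ holds with $p=T_{s-1,\sigma_1}$, $q=T_{s,\sigma_1}$, Lemma \ref{L:semigroupn} gives $T_{s,\sigma_1}(\pi)=\tfrac{c}{m}(\pi-a)\,T_{s-1,\sigma_1}(\pi)^{\,l-1}$. In particular every root of $T_{s-1,\sigma_1}$ is a root of $T_{s,\sigma_1}$, and if $T_{s-1,\sigma_1}$ were itself a power of a linear polynomial then so would $T_{s,\sigma_1}$ be. So it suffices to push the "power of a common linear polynomial" property down the tower: assuming $T_{i+1,\sigma_1}(\pi)=(\pi-a)^{N_{i+1}}$ up to a scalar, the Wronskian identity above, rewritten as $(T_{i,\sigma_1}^{?}/T_{i+1,\sigma_1}^{?})'=\text{const}$ or more carefully as a first-order linear ODE for $T_{i,\sigma_1}$ with the single singular point $\pi=a$, forces $T_{i,\sigma_1}(\pi)$ to be $(\pi-a)^{N_i}$ up to scalar as well; the only place a new root could enter is ruled out by Lemma \ref{L:semigroup}(\ref{L:semigroup ii}), which says the would-be extra exponent $\delta_k+M_k-\delta_0$ is \emph{not} in the semigroup $\Gamma(\delta_0,\dots,\delta_{k-1})$ and therefore cannot be realized by any factorization consistent with the orders coming from $f_\xi$ and $g$.

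The main obstacle I anticipate is the bookkeeping that justifies applying Lemma \ref{L:simple} to the $T_i$ rather than to $f$ and $g$ directly: one must check that $T_i g_y - (T_i)_y g$ is, on the nose, $-J_i(t^{-1},\alpha)t^{-2}$ for some $J_i\in\KK[x,y]$ of the degree predicted by the quasi-approximate-root construction (the "$-2$" shift in the exponent of $T_{s,\sigma_1}$ is exactly this degree correction), and that plugging in $\sigma_1$ picks out precisely the leading coefficients $T_{i,\sigma_1}(\pi)$ and not a lower-order contamination. This is the analogue of Moh's Proposition A.2 computation, and getting the exponents to line up so that Lemma \ref{L:semigroupn} applies verbatim — i.e. verifying $\deg q(\pi)=(l-1)\deg p(\pi)+1$ at every level — is where the characteristic-sequence identities of Lemma \ref{L:semigroup} have to be invoked with care. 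Once the base case $i=s$ is nailed down via Lemma \ref{L:semigroupn}, the inductive descent is comparatively routine, so I would spend most of the write-up on the $i=s$ step and on the degree/valuation verification that licenses it.
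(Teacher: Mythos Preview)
Your descending induction never gets off the ground. At the base step $i=s$ you correctly extract from Lemma~\ref{L:semigroupn} the relation $T_{s,\sigma_1}=\tfrac{c}{m}(\pi-a)\,T_{s-1,\sigma_1}^{\,l-1}$, but this does \emph{not} say that $T_{s,\sigma_1}$ is a power of a single linear factor --- it only will be once you already know $T_{s-1,\sigma_1}$ is. So the hypothesis ``$T_{i+1,\sigma_1}=(\pi-a)^{N_{i+1}}$'' is never established for any $i$, and the induction is vacuous. Moreover, your plan to iterate Lemma~\ref{L:semigroupn} at every consecutive pair $(T_{i,\sigma_1},T_{i+1,\sigma_1})$ fails on degrees: for $i<s-1$ the multiplicities listed in \S7.3 give $\deg T_{i+1,\sigma_1}=(-\mu_{i+1}/d_s)u_s$, which is a multiple of $u_s$, not of the form $(l-1)\deg T_{i,\sigma_1}+1$. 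The crucial ``$+1$'' that makes Lemma~\ref{L:semigroupn} applicable appears only at the very top, in the multiplicity $(-\mu_s-2)\tfrac{u_s}{d_s}+1$ of $T_s$, and that is the only place the paper invokes it.

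The paper's argument is structurally different. It first quotes Moh's Proposition~4.6 to get, in one stroke, that $T_{0,\sigma_1},\dots,T_{s-1,\sigma_1}$ are all powers of a common polynomial $p(\pi)$ of degree $u_s$; Lemma~\ref{L:semigroupn} then yields $T_{s,\sigma_1}=C_s\,p(\pi)^{(-\mu_s-2)/d_s}(\pi-a)$. The real content is showing $p(\pi)=(\pi-a)^{u_s}$, and this is \emph{not} done locally at $\sigma_1$. Instead one supposes $p$ has a root $b\neq a$, extends $\sigma_1|_{\pi=b}$ all the way to a final minor $\pi$-root $\sigma_0$, and observes (via the distribution-detector property, Moh Proposition~4.2) that at $\sigma_0$ the polynomials $T_{i,\sigma_0}$ have degrees $(-\mu_i/d_s)r$ for $i<s$ and $((-\mu_s-2)/d_s)r$ for $i=s$. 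Since $T_{s,\sigma_0}\in\KK[f_{\xi,\sigma_0},g_{\sigma_0}]$ and the $T_{i,\sigma_0}$ for $i<s$ are quasi-approximate roots of that curve, $\deg T_{s,\sigma_0}$ must lie in $\Gamma\bigl((-\mu_0/d_s)r,\dots,(-\mu_{s-1}/d_s)r\bigr)$, forcing $-\mu_s-2\in\Gamma(-\mu_0,\dots,-\mu_{s-1})$. Lemma~\ref{L:semigroup}(\ref{L:semigroup ii}) says exactly that this fails, giving the contradiction. Your invocation of Lemma~\ref{L:semigroup}(\ref{L:semigroup ii}) at $\sigma_1$ itself has no mechanism to produce such a semigroup membership statement; the passage to the final root $\sigma_0$ is the missing idea.
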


\begin{proof}
Using the proof of \cite{moh} Proposition 4.6 verbatim, we get
$$ T_{0,\sigma_1}(\pi)=C_0p(\pi)^{\frac{-\mu_0}{d_s}},\dots,T_{s-1,\sigma_1}(\pi)=C_{s-1}p(\pi)^{\frac{-\mu_{s-1}}{d_s}}$$
where $\deg p(\pi)=u_s$.

Finally by Lemma \ref{L:semigroupn}, we have 
$$T_{s,\sigma_1}(\pi)=C_s p(\pi)^{\frac{-\mu_s-2}{d_s}}(\pi-a)$$
where $a\in \KK$, $C_i\in \KK^*$ for $i=0,\dots,s$.

We claim $p(\pi)=(\pi-a)^{u_s}$. If not, let $\pi=b$ be a root of $p(\pi)$ and $b\ne a$. We extend $\sigma_1|_{\pi=b}$ to 
a final minor $\pi$-root $\sigma_0$ with split order $\delta_0$. 

First we point out that for any $1<\delta<\delta_0$ and $\pi$-root $\sigma=\sigma_0|_{<\delta}+\pi t^\delta$ we have that for some $\lambda<0$,
\begin{align*}
\ord T_0(\sigma)&=(-\mu_0) \lambda\\
&\vdots\\
\ord T_{s-1}(\sigma)&=(-\mu_{s-1}) \lambda\\
\ord T_s(\sigma)&=(-\mu_s-2) \lambda
\end{align*}
Using the proof of \cite{moh} Proposition 4.2 verbatim, we conclude that $\sigma$ is a distribution detector for $T_0(y),\dots,T_s(y)$.

Hence we have

\begin{align*}
T_0(\sigma_0)&=T_{0,\sigma_0}(\pi) t^0+\cdots\\
&\vdots\\
T_{s-1}(\sigma_0)&=T_{s-1,\sigma_0}(\pi) t^0+\cdots\\
T_{s}(\sigma_0)&=T_{s,\sigma_0}(\pi) t^0+\cdots
\end{align*}
where
\begin{align*}
\deg T_{0,\sigma_0}(\pi)&=\frac{-\mu_0}{d_s} r\\
&\vdots\\
\deg T_{s-1,\sigma_0}(\pi)&=\frac{-\mu_{s-1}}{d_s}r\\
\deg T_{s,\sigma_0}&=\frac{-\mu_{s}-2}{d_s} r
\end{align*}
for some positive integer $r$ and $r$ satisfies $1\le r \le u_s$.

We consider polynomial curve $(f_{\sigma_0} (\pi), g_{\sigma_0}(\pi))$ with $\pi$ as the parameter. Then $\{T_{i,\sigma_0}(\pi)\}_{0 \le i \le s-1}$  are quasi-approximate roots of the curve. 
$$T_{s,\sigma_0}(\pi)=T_s(f_{\sigma_0}(\pi), g_{\sigma_0}(\pi)) \in \KK [f_{\sigma_0}(\pi), g_{\sigma_0}(\pi)]$$
As
$\gcd (\frac{-\mu_0}{d_s}r,\dots,\frac{-\mu_{s-1}}{d_s}r)=r$ and $r$ divides 
$\deg T_{s,\sigma_0}(\pi)=\frac{-\mu_s-2}{d_s} r$, we conclude that 
$\frac{-\mu_{s}-2}{d_s} r$  is in the semigroup $\Gamma (\frac{-\mu_0}{d_s}r,\dots,\frac{-\mu_{s-1}}{d_s}r)$.

On the other hand, $$-\mu_s-2=-\mu_s+M_s-n \notin \Gamma(-\mu_0, \dots, -\mu_{s-1})$$
by Lemma \ref{L:semigroup}. This is a contradiction.
\end{proof}

\begin{remark} By \cite{ML} the coefficient of $t$ in any root $\alpha$ of $f(y)$ is zero. We do not find this information can simplify the proof that $T_s$ is not split at order $1$.
\end{remark}

\begin{cor} \label{cor:c7}Suppose $u_s>1$ and let
$\sigma$ be a $\pi-$root of $f$ for the principle minor roots. If order $\delta_\sigma < \dfrac{v_s+1}{u_s+1}$ then $\sigma$ can not split to $u_s$ different roots.
\end{cor}

\begin{proof}
Suppose $\sigma$ split to $u_s$ different roots. For simplify we denote $\delta_\sigma=\delta$. First from last Proposition we know $\delta>1$. We can write
$$f(\sigma)=a_0p(\pi)^{m/d_s}t^{-v_s\frac{m}{d_s}+ u_s\frac{m}{d_s}\delta}+\cdots$$
here 
$$p(\pi)=(\pi-c_1)\cdots(\pi-c_{u_s})$$
{\rm with} 
$$c_i \ne c_j \,\; {\rm if }\;\;i\ne j$$
From above Proposition, we know that $\delta>1$ and we can write
$$g(\sigma)=a_1p(\pi)^{n/d_s}t^{-v_s\frac{n}{d_s}+ u_s\frac{n}{d_s}\delta}+\cdots$$
$$(T_s)_f(\sigma)=bp(\pi)^{(-\mu_s+n-2)/d_s}t^{-v_s\frac{-\mu_s+n-2}{d_s}+ u_s\frac{-\mu_s+n-2}{d_s}\delta}+\cdots$$
$$T_s(\sigma)=a_sq(\pi)t^{-v_s\frac{-\mu_s+n-2}{d_s}-1+(u_s\frac{-\mu_s+n-2}{d_s}+1)\delta}+\cdots$$
here $\deg q(\pi)=u_s\frac{-\mu_s+n-2}{d_s}+1$.
By \cite{moh} [Proposition 4.1], we have
\begin{align}\label{eq:c7}
\dfrac{\partial (T_s(\sigma),g(\sigma))}{\partial(t, \pi)}=-J{\cdot}(T_s)_f(\sigma)t^{-2+\delta}
\end{align}
With the above information, and note that 
$${-v_s\frac{-\mu_s+n-2}{d_s}-1+(u_s\frac{-\mu_s+n-2}{d_s}+1)\delta}<-v_s-1+(u_s+1)\delta<0,$$
so every root of $p(\pi)$ is a root of $q(\pi)$ and $p(\pi)$ has no multi roots, we have
$q(\pi)=p(\pi)q_1(\pi)$. We can use same reason to $q_1(\pi)$ and on. Finally we have 
$$q(\pi)=p(\pi)^ {\frac{-\mu_s+n-2}{d_s}}(\pi-c)$$
for some $c$.
the above equation \ref{eq:c7} can be simplified to
$$\dfrac{\partial}{\partial (t, \pi)}(p(\pi)t^{-v_s+u_s}, (\pi-c)t^{-1+\delta})=Cp(\pi)t^{-v_s+u_s-2+\delta}.$$
But this equation can not hold and it is a contradiction. 
\end{proof}

\section{Case (99, 66) and split of principal minor roots}

Moh writes in \cite{moh} page 209 on the principal minor roots, ``Namely, there is an [sic] unique $\pi$-root $\sigma$ of $g(y)$ of the following form
$$
\sigma=a_{-1}t^{-1}+a_0+a_1t+\pi t^2 
$$
with
$$g(\sigma)=g_\pi(\sigma)t^{-18}+\cdots \quad [{\rm sic,}\; g_\pi(\sigma) \;{\rm should\; be\;} g_\sigma(\pi)]$$
The polynomial $g_\sigma(\pi)$ is either a power of a linear polynomial or the 9-th power of a cubic polynomial with precisely two roots."

For a long time, we try to give judgement for this statement. To do so, there are three claims have to be proven.
\begin{romanenum}
\item\label{case:c1} The principal minor roots do not split except at $\pi$-root order $\delta=2$.
\item\label{case:c2}   $g_\sigma(\pi)$ can not have three roots when $\delta=2$.
\item\label{case:c3}   Let $\delta=2$ and let $g_\sigma(\pi)=c(\pi-b_1)^{18} (\pi-b_2)^9$. Roots in the disc with the center $\sigma |_{\pi=b_1}$ do not split before final.
\end{romanenum}
We can not find materials to support these claims in his paper. We sent Moh an email to point out the possible gaps.  He replied with, ``I will make an investigation of the issue and reply to your e-mail as soon as possible." on Jan. 06, 2016. 

We show that (\ref{case:c2}) and (\ref{case:c3}) are true, but there is one exception case for (\ref{case:c1}) open.

Our tools are two constrains: one is that the denominator of order $\delta \le u_s=3$; another is equation \ref{eq:c7}.

From \cite{moh} page 202, we have data
\begin{align}\label{data:c99}
n=99, \quad m=66, \quad M_2=77, \quad M_3=97, \quad V_3=8, \quad V_2=8, \quad \delta_2=1/3, \quad \delta_1=4/9.
\end{align}
Let $\sigma$ be $\pi$-root of the principal minor roots. From data \ref{data:c99} we get that $-\mu_2=55$ and 
$-\mu_3=145$. We have
$$f(\sigma)=p(\pi)^6t^{6(-8+3\delta)}+\cdots$$
$$g(\sigma)=p(\pi)^9t^{9(-8+3\delta)}+\cdots$$
$$T_2(\sigma)=p(\pi)^5t^{5(-8+3\delta)}+\cdots$$
$$(T_3)_f(\sigma)=p(\pi)^{22}t^{22(-8+3\delta)}+\cdots$$
$$T_3(\sigma)=q(\pi)t^{13(-8+3\delta)-1+\delta}+\cdots$$
here $\deg p(\pi)=u_3=3$ and $\deg q(\pi)=13*3+1$.

(\ref{case:c1}) By the constrain of the denominator of $\delta$, if $\delta$ splits, it must split to 2 or 3 roots. 

When $\delta \ne 2$ and $\delta \ne \frac{5}{2}$. From equation \ref{eq:c7}, we can step by step deduce 
$$q(\pi)=p(\pi)^{13}(\pi-c)$$
and the equation \ref{eq:c7} can not hold for it.

When $\delta=2$. As there is equation
\begin{align}\label{eq:c8}
\frac{\partial}{\partial (t, \pi)}(\pi(\pi+3a)^2(\pi-2a)t^{-1}, \pi^2(\pi+3a)t^{-2})=5\pi^4(\pi+3a)^2 t^{-4},
\end{align}
The equation \ref{eq:c7} produces Moh's 2 roots split case.

When $\delta=\frac{5}{2}$. From equation \ref{eq:c7}, we can deduce
$$q(\pi)=p(\pi)^{10}q_1(\pi)$$
here $\deg q_1(\pi)=3*3+1$.
The equation \ref{eq:c7} gives us
$$\frac{\partial}{\partial (t, \pi)}(q_1(\pi), p(\pi) t^{-\frac{1}{2}})=-p(\pi)^4 t^{-\frac{3}{2}}.$$
But this can hold for any $p(\pi)$ and $q_1(\pi)=-2\int p(\pi)^3dx$. So there is possible $p(\pi)=\pi(\pi^2-c)$. Thus this case is open and it suggests case (99, 66) is open.

(\ref{case:c2}) This case is proved by Corollary \ref{cor:c7}.

(\ref{case:c3}) After $\pi$-root extends to $\delta>2$ and $\delta<3$ along $\sigma |_{\pi=b_1}$, we have
$$f(\sigma)=p(\pi)^6t^{12(-3+\delta)}+\cdots$$
$$g(\sigma)=p(\pi)^9t^{18(-3+\delta)}+\cdots$$
$$T_2(\sigma)=p(\pi)^5t^{10(-3+\delta)}+\cdots$$
$$T_3(\sigma)=q(\pi)t^{25(-3+\delta)}+\cdots$$
here $\deg p(\pi)=2$, $\deg q(\pi)=25$ and $\dfrac{\partial }{\partial(t, \pi)}(q(\pi)t^{25(-3+\delta)},p(\pi)^9t^{18(-3+\delta)})=0$.

As $\gcd (12, 18, 10, 25)=1$, $p(\pi)$ and $q(\pi)$ are powers of a common linear factor $(\pi-c)$, i.e. $\pi$-root is not split at this order $\delta$. The claim is proved.

\section*{Acknowlegement}
The author thanks Professor Mattias Josson and Professor Christian Valqui for pointing out errors in previous version.


\begin{bibdiv}
\begin{biblist}

\bib{am}{article}{
      author={Abhyankar, S.S.},
   author={Moh, T.T.},
       title={Embeddings of the line in a plane},
        date={1975},
     journal={J. Reine Angew. Math.},
      volume={276},
       pages={148\ndash 166},
}

\bib{ML}{article}{
      author={Makar-Limanov, L.},
       title={On the Newton polygon of a Jacobian mate},
        date={2014},
     journal={Automorphisms in birational and affine geometry, 469476, Springer Proc. Math. Stat., 79, Springer, Cham,},
}
\bib{moh}{article}{
      author={Moh, T.T.},
       title={On the Jacobian conjecture and configurations of roots},
        date={1983},
     journal={J. Reine Angew. Math.},
      volume={340},
       pages={140\ndash 212},
      review={\MR{691964 (84m:14018)}},
}

\bib{ss}{article}{
	author={Sathaye, A.},
	author={Stenerson, J.},
	title={ Plane Polynomial Curves},
	journal={ Algebraic Geometry and its Applications (C.L Bajaj, ed), Springer-Verlag, New York-Berlin-Heidelberg},
	date={1994},
	pages ={121\ndash 142},
}

\bib{xu}{misc}{
      author={Xu, Yansong},
       title={On the Jacobian conjecture and affine lines},
        date={1993},
        note={Purdue University, Ph.D thesis.},
}

\bib{xu2}{article}{
	author={Xu, Yansong},
	title={A strong Abhyankar-Moh theorem and criterion of embedded line},
	date={2014},
	journal={J. of Algebra},
	volume={409},
	pages={382\ndash 386},
}

\bib{zhang}{misc}{
      author={Zhang, Yitang},
       title={The Jacobian conjecture and the degree of field extension},
        date={1991},
        note={Purdue University, Ph.D thesis.},
}

\end{biblist}
\end{bibdiv}

\end{document}